\documentclass{article}
\usepackage[utf8]{inputenc}
\usepackage{amsmath}
\usepackage{amssymb}
\usepackage{amsthm}
\usepackage{graphicx}
\usepackage{algpseudocode}
\usepackage{algorithm}
\usepackage{caption}
\usepackage{subcaption}

\newtheorem{theorem}{Theorem}
\newtheorem{lemma}{Lemma}
\newtheorem{definition}{Definition}

\bibliographystyle{siam}

\title{A time adaptive multirate Quasi-Newton waveform iteration for coupled problems}
\author{Niklas Kotarsky, Philipp Birken\\
Lund University, Centre for Mathematical Sciences,\\Box 118, 22100 Lund, Sweden}

\begin{document}
	
\maketitle

\begin{abstract}
We consider waveform iterations for dynamical coupled problems, or more specifically, PDEs that interact through a lower dimensional interface. We want to allow for the reuse of existing codes for the subproblems, called a partitioned approach. To improve computational efficiency, different and adaptive time steps in the subsolvers are advisable. Using so called waveform iterations in combination with relaxation, this has been achieved for heat transfer problems earlier. Alternatively, one can use a black box method like Quasi-Newton to improve the convergence behaviour. These methods have recently been combined with waveform iterations for fixed time steps. Here, we suggest an extension of the Quasi-Newton method to the time adaptive setting and analyze its properties. 

We compare the proposed Quasi-Newton method with state of the art solvers on a heat transfer test case, and a complex mechanical Fluid-Structure interaction case, demonstrating the methods efficiency. 
\end{abstract}

\section{Introduction}

We consider time dependent multiphysics aplications, or more specifically coupled PDEs that interact through a lower dimensional interface. Examples here are gas-quenching (heat), flutter in airplanes (forces) or climate models (both). To simulate these problems, it is often desirable to reuse existing codes for the different models, since they represent long term development work. This is called a partitioned approach. To increase computational efficiency we want a partitioned method that is high order, allows for different and adaptive time steps in the separate models, is robust to variations in discretization and model parameters, and contains fast inner solvers.

A major candidate to fulfill our requirements is the so called Waveform Iteration or Waveform relaxation (WR). This class of methods was developed to simulate large electrical circuits effectively in parallel \cite{WhSa86}. To this end, the underlying large system of ODEs is split into smaller ODE systems, which are solved separately given approximations of the solutions of the other subsystems. This way, an iterative process is defined. This framework naturally allows the use of different time integration methods in each subsystem. Using interpolation, different time steps can be employed in a straight forward manner. 

Convergence of waveform iterations has been studied in various continuous and discrete settings for ODEs, see for example \cite{Ne89Cont, Ne89Disc, WhSa86}. The time adaptive case, where the time grids in the sub solver change in each iteration, was analyzed in \cite{BeZe93}. WR methods were also extended to PDEs, see e.g. \cite{JaVa96a, JaVa96b,gakwma:16, BiMo19}. Unlike the ODE case, waveform iterations are not guaranteed to converge for the PDE case. 

To speed up convergence, an acceleration step is often applied. The simplest acceleration method is relaxation, which has been used in \cite{BiMo18} to achieve fast convergence for heat transfer problems. However, to obtain fast linear or even superlinear convergence, the relaxation parameter has to be optimized for the specific problem, requiring extensive analysis beforehand, see e.g. \cite{BiMo19}. A time adaptive discretization has been combined with relaxation in \cite{BiMeMo23} for coupled heat equations. Alternatively, one can use a black box method like Quasi-Newton, also referred to as Anderson acceleration. These methods have been studied extensively, see e.g. \cite{polreb:21}, and for partitioned coupling in for example \cite{HaDe10, SchUe15, SchMe17}. They have also recently been extended to WR for the case with fixed but variable time steps in \cite{RuUe21}. 

In this article, we further extend the Quasi-Newton method to the time adaptive case. An inherent difficulty is that since the number of time steps changes with every iteration, the dimension of the discrete solution vector changes with every iteration as well. Thus, the Quasi-Newton method cannot be directly applied to the sequence of iterates. Furthermore, a variable dimension of the solution vector makes the implementation into a coupling library for partitioned coupling difficult. 

We therefore suggest to interpolate the iterates on a fixed auxiliary time grid. Thus, the iteration can be written in terms of a vector of fixed dimension, at the cost of an additional interpolation step. However, the implementation into a coupling library is perfectly feasible and we present ours in the open source C++ library preCICE \cite{ChDa22}. We then analyze the behaviour of the algorithm for fixed time grids and discuss how to choose the auxiliary time grid. We also demonstrate the methods efficiency by comparing it with optimal relaxation \cite{BiMeMo23} and QNWR for fixed time grids \cite{RuUe21} on a simple thermal transfer case.  We further showcase the method on a more advanced Fluid-structure interaction problem. 

In the following, we first describe Waveform relaxation, then in section 3 the Quasi-Newton method. We analyze the new method in section 4 and provide numerical experiments in sections 5 and 6. 

\section{Waveform iterations}

Consider the following coupled ODEs on the interval $[0,T_f]$:
\begin{align*}
	\dot{w} &= g(w, v), \quad w(0) = w_0, \\
	\dot{v} &= h(v, w), \quad v(0) = v_0, 
\end{align*}
where $w \in \mathbb{R}^{d_w}$, and $v \in \mathbb{R}^{d_v}$. We assume that $g$ and $h$ are Lipschitz continuous functions.

To define our waveform iteration, we divide the interval $[0,T_f]$ into a finite number of time windows $[T_{w-1},T_w]$. In the following, without loss of generality, we assume that there is only one time window $[0,T_f]$. The Gau\ss -Seidel or serial waveform iteration is then given by  
\begin{align*}
	\dot{w}^{k+1} &= g(w^{k+1}, v^{k}), \quad w^{k+1}(0) = w_0 ,\\
	\dot{v}^{k+1} &= h(v^{k+1}, w^{k+1}) , \quad v^{k+1}(0) = v_0, \text{ for } k=0, ...,
\end{align*}  
where the starting guess $v^0 \equiv v_0$ is commonly used. We use the termination criterion 
\begin{align}\label{Conv-crit}
	\|v^{k+1}(T_{f})-v^{k}(T_{f})\|/\|v^{k+1}(T_{f})\| \leq Tol_{WR},
\end{align} where $\|\cdot\|$ is a vector norm. 

A discrete waveform iteration is given by discretizing both sub problems in time. To this end, we make use of the notion of a time grid, defined without loss of generality on $[0,T_f]$.

\begin{definition}
	A vector ${\cal T}\in\mathbb{R}^N$ with components $t_i\in[0,T_f]$ and
	\[0=t_0<t_1<\hdots<t_{N-1}=T_f\]
	is called a time grid. 
\end{definition}

The two discretizations are assumed to use the grids ${\cal T}_w$ and ${\cal T}_v$ with $N_w$ and $N_v$ grid points, respectively. This yields the discrete solutions $\bar{w}^{k} =\{w^{k,i}\}_{i=0}^{N_w-1}$ and $\bar{v}^{k} =\{v^{k,i}\}_{i=0}^{N_v-1}$ for the grid points in ${\cal T}_w$ and ${\cal T}_v$ respectively. The two time grids are not assumed to be matching. Thus, one has to use some kind of interpolation in the discrete waveform iteration. We define the interpolation below for the general setting where we have  data vector $x \in R^{dN}$ consisting of the $d$-dimensional components $x_i$ for $i = 0,...,N-1$, where each $x_i$ belongs to a time step $t_i \in \mathcal{T}$.

\begin{definition}
We denote a continuous interpolation of a data vector $x \in R^{dN}$ on the time grid $\mathcal{T}$ by $ \mathbf{I}_\mathcal{T}(x)$. To simplify the notation, we interpret the interpolation as a map from either a time grid and a data vector, or only a data vector to a continuous function. We use the notation $\mathbf{I}_{{\cal T}}(x)(\chi): \mathbb{R}^{N} \xrightarrow{} \mathbb{R}^{Nd}$ to denote the sampling of the interpolant $\mathbf{I}_{{\cal T}}(x)$ on a time grid $\chi$.  
\end{definition}

Using interpolation, the discrete waveform iterations are obtained by discretizing 
\begin{align*}
	\dot{w}^{k+1} & = g( w^{k+1}, \mathbf{I}_{{\cal T}_v}( \bar{v}^{k})), \quad w^{k+1}(0) = w_0, \\
	\dot{v}^{k+1} & = h(v^{k+1}, \mathbf{I}_{{\cal T}_w}( \bar{w}^{k+1})) , \quad v^{k+1}(0) = v_0,
\end{align*}
with two possibly different time discretization methods in time. 

To extend the waveform iteration to the time adaptive setting, where the two time grids change in each iteration, we denote the iteration number with a superscript k. It is worth pointing out that we do not assume that the two sequences of time grids have the same number of grid points in each iteration k. Thus, the discrete solutions are be given by $\bar{w}^{k} =\{w^{k,i}\}_{i=0}^{N_w^k}$ and $\bar{v}^{k} =\{v^{k,i}\}_{i=0}^{N_v^k}$, where ${N_w^k}$ and ${N_v^k}$ denotes the number of grid points in time grids ${\cal T}_w^k$ and ${\cal T}_v^k$, respectively. The discrete waveform iteration is then given by discretizing 
\begin{align*}
	\dot{w}^{k+1} & = g( w^{k+1}, \mathbf{I}_{{\cal T}_v^k}( \bar{v}^{k})), \quad w^{k+1}(0) = w_0, \\
	\dot{v}^{k+1} & = h(v^{k+1}, \mathbf{I}_{{\cal T}_w^k}( \bar{w}^{k+1})) , \quad v^{k+1}(0) = v_0.
\end{align*}

The convergence of waveform iterations has been studied in various continuous and discrete settings for ODEs, see for example \cite{Ne89Cont,Ne89Disc}. For ODEs, waveform iterations achieve superlinear convergence with an error bound of the form
\begin{equation*}
	||e^{k}||_{[0,T_f]} \leq \frac{(CT_f)^k}{k!} ||e^0||_{[0,T_f]},
\end{equation*}
where $C>0$ is a constant depending on the Lipschitz-constant of the coupled ODE, as well as their time discretization. The norm $||.||_{[0,T_f]}$ is the supremumnorm $||e^k||_{[0,T_f]} := \sup_{t \in [0,T_f]} ||e^k(t)||$. Convergence in the time adaptive case was proven in \cite{BeZe93} under the assumption that both time grids in the two sub solvers converge. As the authors point out in \cite{BeZe93} the assumption that the time grids converge is not necessarily realistic, since the time step is given by a controller that does not have anything to do with the waveform iteration. 

\subsection{Waveform iterations for interface coupled PDEs}
Waveform iterations have also been used in the context of coupled PDEs which interact through a lower dimensional boundary $\Gamma$, see for example \cite{JaVa96a, JaVa96b, RuUe21, BiMo19, BiMeMo23, gakwma:16}. This is accomplished by splitting the two PDEs into separate systems that interact by the exchange of boundary data. This allows us to treat the two solvers as black boxes which map interface data onto other interface data. In the continuous setting, the Gau\ss-Seidel waveform iteration for PDEs can be written as
\begin{align*}
	x_1^{k+1} & = \mathcal{S}_1( x_2^{k}) \in C\left(\Gamma \times [0,T_f]\right), \\
	x_2^{k+1} & = \mathcal{S}_2(x_1^{k+1}) \in C\left(\Gamma \times [0,T_f]\right) ,
\end{align*}
where $x_1 \in C\left(\Gamma \times [0,T_f]\right)$ and $x_2 \in C\left(\Gamma \times [0,T_f]\right)$ are interface data from the first respectively second domain. $\mathcal{S}_1$ and $\mathcal{S}_2$ are unbounded Poincaré-Steklov operators that define a map from the values of a boundary condition to the values of another boundary condition. 

The discrete waveform iteration is obtained by discretizing $ \mathcal{S}_1$ and $ \mathcal{S}_2$ in space and time. Here we assume that both solvers use fixed spatial grids, since the focus is on the time integration. If the meshes in time are also fixed, the discrete Poincaré-Steklov operators can be defined as 
\begin{equation*}
	S_i: C[0,T_f]^d \xrightarrow{} \mathbb{R}^{d N_i},
\end{equation*}  
for $i = 1,2$, where $d$ denotes the size of $x_1$ and $x_2$, and $N_i$ the size of the time grid for solver $i$. The waveform iteration can then be written as
\begin{align}\label{WRfixed}
	x_2^{k+1} = S_2 \circ \mathbf{I}_{\mathcal{T}_1} \circ S_1 \left( \mathbf{I}_{\mathcal{T}_2}\left(x_2^k\right) \right) \in \mathbb{R}^{d N_2}.
\end{align}
If we use instead a time adaptive method, the discrete operators can instead be defined by the following map
\begin{equation*}
	S^{TA}_i: C[0,T_f]^d \xrightarrow{} (\mathbb{R}^N,\mathbb{R}^{d N}),
\end{equation*} taking interface data from the other solver and returning the used time grid $\mathcal{T}_i^k$, as well as the corresponding discrete interface data. This allows us to write the waveform iteration as \begin{align}\label{badTAWR}
	\left(x_2^{k+1}, \mathcal{T}_2^{k+1} \right) = S^{TA}_2 \circ \mathbf{I}_{\mathcal{T}^{k+1}_1} \circ S^{TA}_1 \left( \mathbf{I}_{\mathcal{T}^{k}_2}\left(x_2^k\right) \right) \text{ on } \mathbb{R}^{d N^{k+1}_2 \times N^{k+1}_2}.
\end{align}


It is worth noting that unlike in the ODE case, the waveform iterations given in Equation \eqref{badTAWR} and \eqref{WRfixed} are not guaranteed to converge. 
However, there exist convergence theorems for various specific cases, see for example  \cite{gakwma:16,JaVa96a, JaVa96b}.

To improve the convergence behaviour, some kind of acceleration is often employed. The simplest acceleration method is relaxation, where a weighted  average is taken of the iterates. Relaxation was used in a fully discrete time adaptive setting in \cite{BiMeMo23}, where it was proposed to sample the old interpolations to the newest time grid ${\cal T}_{2}^{k+1}$. In our notation this is 
\begin{align*}
	x_2^k = \theta \hat{x}_2^k + (1 - \theta)\mathbf{I}_{\mathcal{T}_2^k}(x_2^{k})({\cal T}_{2}^{k+1}),
\end{align*}
where $\theta \in [0,1]$ is the relaxation parameter. The convergence speed of the waveform iteration with relaxation is highly dependent on the relaxation parameter. For two coupled heat equations, the relaxation parameter has been optimized in \cite{BiMo18}, which achieves fast convergence in the time adaptive setting. Another option would be to apply a modified Quasi-Newton method.

\section{Quasi-Newton method}

Quasi-Newton methods have been studied in the context of coupled problems in for example \cite{HaDe10,SchMe17,SchUe15} for the case where both solvers do one time step. They have recently been extended in \cite{RuUe21} to waveform iterations where both solvers use fixed equidistant time grids. For the case where the sub solvers use fixed but variable time steps, the residual is given by
\begin{equation}
	r(x) = \mathcal{S}_{2}  \circ \mathbf{I}_{\mathcal{T}_1} \circ \mathcal{S}_{1}( \mathbf{I}_{\mathcal{T}_2}(x)) - x \in \mathbb{R}^{d N_2},
\end{equation} 
where the term $\mathcal{S}_{2}  \circ \mathbf{I}_{\mathcal{T}_1} \circ \mathcal{S}_{1}( \mathbf{I}_{\mathcal{T}_2}(x))$ corresponds to the fixed point iteration in Equation \eqref{WRfixed}. 
To simplify the notation, we introduce the operator $H:R^{dN_2} \xrightarrow{} R^{d N_2}$ defined as 
\begin{equation*}
	H :=\mathcal{S}_{2}  \circ \mathbf{I}_{\mathcal{T}_1} \circ \mathcal{S}_{1}( \mathbf{I}_{\mathcal{T}_2}(x)), 
\end{equation*} 
simplifying the residual to 
\begin{equation*}
	r(x) = H(x) - x.
\end{equation*}

In order to avoid badly conditioned matrices, the Quasi-Newton method is often not applied to the residual $r(x) = H(x) - x$ directly. Instead it is applied to the modified residual defined as 
\begin{equation*}
	\tilde{r}(x) = H^{-1}(x) - x.
\end{equation*}
The update for the Quasi-Newton method is then given by
\begin{align}\label{QNWRFixed}
	\begin{split}
		\hat{x}^k &= H(x^k) \\
		x^{k+1} &= \hat{x}^{k} - W_k (V_k^T V_k)^{-1}V_k^{T} \tilde{r}(\hat{x}^{k}),
	\end{split}
\end{align} 
where  
\begin{equation*}
	V_k = [\tilde{r}(\hat{x}^1)-\tilde{r}(\hat{x}^0), ... , \tilde{r}(\hat{x}^k)-\tilde{r}(\hat{x}^{k-1})]  \in \mathbb{R}^{dN_2  \times k}
\end{equation*} 
and 
\begin{equation*}
	W_k = [\hat{x}^1 -\hat{x}^0, .... , \hat{x}^{k}- \hat{x}^{k-1}] \in \mathbb{R}^{dN_2 \times k}.
\end{equation*}

The modified residual $\tilde{r}(x)$ is generally not known explicitly. However, one can use the relation 
\begin{equation*}
	r(x) = H(x)-x = H^{-1} \circ H(\hat{x}) -  H^{-1}(\hat{x}) = -\tilde{r}(\hat{x}). \label{residuals}
\end{equation*} 
Using this, the matrix $V_k$ can be written as 
\begin{equation*}
	V_k = [-r(x^1)+r(x^0), ... , -r(x^k)+r(x^{k-1})]  \in R^{nd  \times k}. 
\end{equation*} 
This update requires one to have computed $x^1$, which is why one in the  first iteration commonly uses  a relaxation step.

Using the QR decomposition of $V_k = Q_k R_k$, where $Q_k$ has orthonormal columns and $R_k$ is an upper triangular matrix, the term $(V_k^TV_k)^{-1}V_k^T r(x^k)$ can efficiently be computed by solving $R_k \alpha = Q^T r(x^k)$ with backwards substitution. The update $W_k(V_k^TV_k)^{-1}V_k^T r(x^k) $ can then be computed by $W_k \alpha$. 

In \cite{RuUe21} an alternative extension of the QN method to waveform iterations, referred to as the reduced Quasi-Newton waveform relaxation (rQNWR), is also presented. There, only the last time step is used to construct the residual and the matrix $V_k$, reducing the cost of the QR decomposition. Here, we focus on the version where all time steps are included in the residual and the matrix $V_k$. This allows for stronger convergence theorems in Section \ref{analysisQN}.

\subsection{Generalization to time adaptive sub-solvers}\label{QNWRTA}

The main difficulty with generalizing the Quasi-Newton method to time adaptive waveform iterations is that the number of time steps and the dimension of the discrete solution vector vary between iterations. Thus, the Quasi-Newton method for fixed time grids in equation \eqref{QNWRFixed} cannot directly be applied to the time adaptive waveform iterations in equation \eqref{badTAWR}. 

To extend the QN method to time adaptive waveform iterations, one has to either modify the Quasi-Newton method to work with a solution vector whose dimension changes in each iteration, or alternatively modify the time adaptive waveform iterations by interpolating everything to a fixed time grid. The former is possible by interpolating all vectors in the matrices $V_k$ and $W_k$, and then sampling the interpolant on the new time grid $\mathcal{T}_k$ after every iteration. Thus, one needs dynamic memory management and one needs to adjust the core QN update. This method is difficult to implement in a coupling library, and quite invasive. Furthermore, the convergence behaviour is hard to analyze, making it difficult to justify the added complexity of the implementation.

We therefore propose to instead interpolate the solution on an auxiliary fixed time grid $\mathcal{T}_{QN} = \{t^i_{QN}\}_{i=0}^{N_{QN}-1}$. We denote an interface vector on that grid by $x_{QN}$. It has the fixed dimension $dN_{QN}$. This changes the iteration in Equation \eqref{badTAWR} to
\begin{align} \label{WRTAQN}
	x_{QN}^{k+1} = \mathbf{I}_{\mathcal{T}_2^{k+1}} \circ \mathcal{S}^{TA}_{2}  \circ \mathbf{I}_{\mathcal{T}_1^{k+1}} \circ \mathcal{S}^{TA}_{1}\left( \mathbf{I}_{\mathcal{T}_{QN}}\left(x_{QN}^{k}\right)\right)\left(\mathcal{T}_{QN}\right).
\end{align} 

The Quasi-Newton update can then be computed exactly as described for fixed time steps, yielding the following time adaptive QNWR method:
 \begin{align}\label{timeAdaptiveQN}
	\begin{split}
		\hat{x}_{QN}^{k+1} &= \mathbf{I}_{\mathcal{T}_2^{k+1}} \circ \mathcal{S}^{TA}_{2}  \circ \mathbf{I}_{\mathcal{T}_1^{k+1}} \circ \mathcal{S}^{TA}_{1}\left( \mathbf{I}_{\mathcal{T}_{QN}^k}\left(x_{QN}^{k}\right)\right)\left(\mathcal{T}_{QN}\right) \\
		x_{QN}^{k+1} &= \hat{x}_{QN}^{k+1} + W_k (V_k^T V_k)^{-1}V_k^{T} r_{QN}(x_ {QN}^k),
	\end{split}
\end{align}
where the residual is given by 
\begin{align*}
	r_{QN}(x^k) = \hat{x}_{QN}^{k+1} - x^{k}_{QN} \in \mathbb{R}^{d N_{QN}},
\end{align*} 
and we have the matrices 
\begin{align*}
	V^{QN}_k &= [r_{QN}(x_{QN}^1)-r_{QN}(x_{QN}^0), ... , r_{QN}(x_{QN}^k)-r_{QN}(x_{QN}^{k-1})]  \in R^{n_{QN} d  \times k},\\
	W^{QN}_k &= [\hat{x}_{QN}^1 -\hat{x}_{QN}^0, .... , \hat{x}_{QN}^{k}- \hat{x}_{QN}^{k-1}] \in R^{d N_{QN}  \times k}.
\end{align*}

We have thus re-used the Quasi-Newton algorithm from the fixed time step case by adding one extra interpolation step, making it feasible to implement in coupling codes like preCICE. If both sub solvers use constant time grids, then we recover the QNWR method for fixed time steps if we choose $\mathcal{T}_{QN} = \mathcal{T}_2$. The interpolation error introduced by interpolating the iterates from $\mathcal{T}_2$ to $\mathcal{T}_{QN}$ when $\mathcal{T}_{QN} \neq \mathcal{T}_2$ will be investigated in section \ref{analysisQN}.

\subsection{Implementation details of time adaptive QNWR in the open source coupling library preCICE}\label{implementationDetail}

preCICE is an open-source couping library designed to couple different solvers together in a minimally invasive fashion. For a more thorough description we refer to \cite{ChDa22}. In essence, preCICE controls the coupling between the two solvers, handling everything from the communication between the solvers to interpolating the interface data, as well as checking the termination criterion. For our purposes it is enough to know that preCICE has knowledge of the sub solvers interface data, time grids and the interpolations. Internally in preCICE the interface data, time steps and the interpolants are all stored in a class which all acceleration methods have full access to. 

preCICE also supports a wide range of different acceleration methods such as relaxation, aitken and QN. Furthermore, preCICE also supports a wide range of different options for such as scaling and filtering, as well as the QN extension IQN-IMVJ \cite{SchUe15}. The QN implementation in preCICE uses a vector to internally store the interface data. Furthermore, the implementation uses a class structure, with a base class for the QN methods containing common functionality.

We have implemented the time adaptive QNWR extension from section \ref{QNWRTA} in preCICE, scheduled to be released in version 3.2.0. For this, not many changes needed to be made. First, the pre-processing step, where the interface data is assembled in a large vector, has to be extended to interpolate the waveform iteration on the QN time grid $\mathcal{T}_{QN}$. Second, the post-processing step, where the QN vector is split back into interface data, has to be extended to also update the interpolant. These changes require that the QN methods can sample, reset and write new data to the interpolation object. However, they only require modifications to the QN base class. Lastly, in order to be able to select the Quasi-Newton time grid $\mathcal{T}_{QN}$ as the time grid of the second sub solver $\mathcal{T}_2$ the data structures in the QN classes have to be initialized after the first waveform iteration. This choice requires small changes to the inherited classes of the QN base class and is further discussed in Section \ref{preliminaryStudy}. 

\subsection{Linear analysis of the new method}\label{analysisQN}

To analyze the interpolation error in the new method \eqref{timeAdaptiveQN}, we assume that the solvers $S_1$ and $S_2$ both use the fixed time grid $\mathcal{T}_2$. 
We further simplify the analysis by assuming that the solvers $S_1$ and $S_2$ are of the form, 
\begin{align*}
	x_1 &= A_1 x_2 + b_1 \\
	x_2 &= A_2 x_1 + b_2,
\end{align*} 
where $A_1, A_2 \in \mathbb{R}^{d N_2 \times d N_2}$ and $b_1, b_2 \in \mathbb{R}^{d N_2}$. The fixed point equation of the coupled system, given by 
\begin{align*}
	x_2 = S_2 \circ S_1(x_2),
\end{align*}
 simplifies then to 
 \begin{equation}\label{simWRTA}
	x_2 = A x_2 + b,
\end{equation} 
where $A \in \mathbb{R}^{d N_2 \times d N_2}$ and $b \in \mathbb{R}^{d N_2}$. 

In addition, we only consider linear interpolation, meaning that we can denote $I_{\mathcal{T}_{QN}}(x_{QN})({\cal T}_S)$ and $I_{\mathcal{T}_{S}}(x_{2})({\cal T}_{QN})$ by $\Phi x_{QN}$ and $\Theta x_{2}$ respectively, where $\Phi \in \mathbb{R}^{d N_2\times d N_{QN}}$ and $\Theta \in \mathbb{R}^{d N_{QN}\times d N_{2}}$ are matrices. For this simplified problem, the fixed point equation for the time adaptive QN method \eqref{timeAdaptiveQN} is given by \begin{align}\label{simQNWRTA}
	\begin{split}
		x_{QN} =\Theta A \Phi x_{QN} + b, \\
	\end{split}
\end{align} 

We now assume that the matrices $A$, $A-I$ and $\Theta A \Phi  - I$ are invertible. While these assumptions can be weakened this will not yield significantly more insight into the QNWR algorithms performance. Thus, the solutions of the fixed point equations \eqref{simWRTA} and \eqref{simQNWRTA} are characterized by 
\begin{equation*}
	x^* = (A - I)^{-1}b,
\end{equation*} 
and 
\begin{equation*}
	x^*_{QN} = (\Theta A \Phi - I)^{-1} \Theta b,
\end{equation*}  
respectively. The interpolation error of the QNWR method is then
\begin{equation*}
e_{QN} = x^* -  \Phi  x_{QN}^{*}.
\end{equation*}
Under these conditions it fulfills the following lemma.

\begin{lemma} \label{interpError}
If the coupled problem is linear and constant time grids are employed, then the interpolation error of the time adaptive QNWR \eqref{timeAdaptiveQN} is given by
\begin{equation}
e_{QN} = ( \Phi \Theta - I )b + ( \Phi  \Theta  (A^{-1} - \Phi \Theta )^{-1} \Phi \Theta  -(A^{-1}-I)^{-1} )b.
\end{equation} 
\end{lemma}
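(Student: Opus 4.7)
The plan is to start from the explicit formulas already derived for the fixed points,
\[
x^{*} = (A - I)^{-1} b, \qquad \Phi x^{*}_{QN} = \Phi(\Theta A \Phi - I)^{-1} \Theta b,
\]
and reduce the difference to the claimed expression by algebraic manipulation only. The two technical ingredients I expect to need are a push-through identity (to move $\Phi$ and $\Theta$ past the inverse in the second term) and the algebraic relation between $(A-I)^{-1}$ and $(A^{-1}-I)^{-1}$ (to produce the $(A^{-1}-I)^{-1}$ piece that appears in the target).

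First, I would rewrite the second term. Using the identity
\[
(I - \Theta A \Phi)^{-1} = I + \Theta\bigl(I - A\Phi\Theta\bigr)^{-1} A\Phi,
\]
which is verified by multiplying both sides by $I - \Theta A \Phi$, I obtain
\[
\Phi(\Theta A\Phi - I)^{-1}\Theta = -\Phi\Theta - \Phi\Theta\bigl(I - A\Phi\Theta\bigr)^{-1} A\,\Phi\Theta.
\]
Next, I would use that $A^{-1} - \Phi\Theta = A^{-1}(I - A\Phi\Theta)$, whence $(I - A\Phi\Theta)^{-1} A = (A^{-1} - \Phi\Theta)^{-1}$, to get
\[
\Phi(\Theta A\Phi - I)^{-1}\Theta = -\Phi\Theta - \Phi\Theta\,(A^{-1} - \Phi\Theta)^{-1}\,\Phi\Theta.
\]
This recovers the two $\Phi\Theta$-terms of the target formula up to sign.

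Finally, to convert the leading $(A-I)^{-1} b$ term into $-b - (A^{-1}-I)^{-1} b$, I would use the identity $(A-I)^{-1} + (A^{-1}-I)^{-1} = -I$, which follows from writing $A^{-1}-I = -A^{-1}(A-I)$, so that $(A^{-1}-I)^{-1} = -(A-I)^{-1} A$ and hence $(A-I)^{-1} + (A^{-1}-I)^{-1} = -(A-I)^{-1}(A-I) = -I$. Substituting both simplifications into $e_{QN} = (A-I)^{-1} b - \Phi(\Theta A\Phi - I)^{-1}\Theta b$ yields
\[
e_{QN} = -b - (A^{-1}-I)^{-1} b + \Phi\Theta b + \Phi\Theta(A^{-1} - \Phi\Theta)^{-1}\Phi\Theta b,
\]
which regroups to the stated expression. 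The main obstacle is spotting the correct push-through identity and the $A^{-1}$-trick; once these are in hand, everything else is bookkeeping. The invertibility assumptions on $A$, $A-I$, and $\Theta A\Phi - I$ are exactly what is needed to legitimize every inverse that appears in the manipulation.
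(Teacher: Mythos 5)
Your proof is correct and follows essentially the same route as the paper: the paper invokes the Sherman--Morrison--Woodbury formula to rewrite $(\Theta A \Phi - I)^{-1}$ and $(A-I)^{-1}$, which yields exactly the two identities you establish by hand via the push-through identity and the relation $(A-I)^{-1}+(A^{-1}-I)^{-1}=-I$. The subsequent substitution and regrouping are identical, so the only difference is that you verify the Woodbury-type identities directly rather than citing them by name.
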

\begin{proof}
	The interpolation error is given by 
	\begin{equation*}
		e_{QN} = x^* -  \Phi  x_{QN}^{*} = (A- I)^{-1}b -  \Phi ( \Theta A \Phi  - I)^{-1} \Theta b.
	\end{equation*}
	Using that $A$, $A-I$ and $\Theta A \Phi  - I$ are invertible, the Sherman-Morrison-Woodbury formula yields
	\begin{align*}
		( \Theta A \Phi  - I)^{-1} &=  - I -  \Theta (A^{-1} - \Phi  \Theta )^{-1} \Phi,\\
		(A-I)^{-1} &= -I - (A^{-1}-I)^{-1},
	\end{align*}
	resulting in the following formula for the error
	\begin{equation*}
		e_{QN} = \left(-I - (A^{-1}-I)^{-1}\right)b - \Phi \left(- I  - \Theta (A^{-1} - \Phi  \Theta )^{-1} \right) \Theta b. 
	\end{equation*} 
	Simplifying yields,
	\begin{align*}
		e_{QN} = ( \Phi \Theta - I )b + ( \Phi  \Theta  (A^{-1} - \Phi \Theta )^{-1} \Phi \Theta  -(A^{-1}-I)^{-1} )b. 
	\end{align*} 
\end{proof}

It is worth pointing out that this interpolation error is zero if $\Phi \Theta = I$, which is the case if $\mathcal{T}_{QN} = \mathcal{T}_{2}$. If  $\Phi \Theta \neq I$, then the QNWR algorithm gives rise to an interpolation error which affects the accuracy of the simulation. For fixed time grids, choosing $\mathcal{T}_{QN}$ as the first time grid of the second sub solver $\mathcal{T}_2^1$ guarantees a zero interpolation error. For further investigations we refer to our experiments.  

Regarding convergence, general theorems are difficult, but a great deal can be said for linear fixed point equations like \eqref{simQNWRTA}. In \cite{HaDe10}, the following relationship was derived between the iterates $x^{k}_{GM}$ of GMRES and $x^{k}_{QN}$ of QN acceleration: 
\begin{align*}
	x^{k+1}_{QN} = A x^{k}_{GM} + b.
\end{align*} 
This relationship immediately proves the following theorem, since GMRES finds the exact solution in $d$ iterations.

\begin{theorem}\label{FiniteConvQN}
For a linear coupled system, QN converges to the exact solution in $d+1$ iterations, where d corresponds to the number of interface unknowns.
\end{theorem}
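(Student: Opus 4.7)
The plan is to leverage the stated equivalence between the QN and GMRES iterates together with the finite termination property of GMRES in exact arithmetic. Let me denote the dimension of the interface vector (all interface unknowns together, i.e.\ the total length of $x$) by $d$, so that the fixed point equation takes the form $x = Ax + b$ on $\mathbb{R}^{d}$.

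First, I would rewrite the fixed point problem as a linear system for GMRES. Since the residual is $r(x) = H(x) - x = (A - I)x + b$, finding a fixed point of $H$ is equivalent to solving the linear system $(I - A)x = b$. This is the system to which GMRES is implicitly applied through the identity $x^{k+1}_{QN} = A x^{k}_{GM} + b$ inherited from \cite{HaDe10}, with matching initial guess $x^0_{GM} = x^0_{QN}$.

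Next, I would invoke finite termination. In exact arithmetic, GMRES applied to a nonsingular $d \times d$ linear system terminates with the exact solution in at most $d$ iterations, because the Krylov subspace $\mathcal{K}_k(I-A, r^0)$ is necessarily $(I-A)$-invariant after at most $d$ steps. Hence $x^{d}_{GM} = x^*$, the exact solution of $(I - A)x = b$, which is equally the fixed point of $x = Ax + b$.

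Finally, I would use the coupling identity to transfer this to the QN iterate: at the next QN step,
\begin{equation*}
x^{d+1}_{QN} = A x^{d}_{GM} + b = A x^* + b = x^*,
\end{equation*}
proving convergence in $d+1$ iterations. The only real obstacle is being careful about the indexing shift between QN and GMRES (one QN iteration corresponds to one GMRES iteration, but evaluated one step later through a single fixed point application), and about the fact that the result is stated in exact arithmetic; in floating point one only expects the bound to hold approximately, but this is consistent with the usual interpretation of such GMRES-based convergence statements.
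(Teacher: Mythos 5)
Your proof is correct and follows essentially the same route as the paper: the paper also invokes the identity $x^{k+1}_{QN} = A x^{k}_{GM} + b$ from the cited reference together with finite termination of GMRES in $d$ iterations, and concludes immediately. Your version merely spells out the details (the underlying system $(I-A)x=b$, exact arithmetic, and the index shift), with the same reading of $d$ as the total dimension of the interface vector.
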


Thus, Theorem \ref{FiniteConvQN} guarantees that the time adaptive QNWR method converges to $x^*_{QN}$ in a finite number of iterations. This means that the choice of a fixed auxiliary grid when defining the new method gives us a convergent method, with a limit that contains an interpolation error, which can be controlled throught the choice of the auxiliary grid. 

As a final pont, we discuss termination: The algorithm might not terminate, since the residual is also affected by the interpolation error through the relation
\begin{equation*}
	r(\Phi x_{QN}^*) = r(\Phi x_{QN}^*)-r(x^*) = (A-I)(\Phi x_{QN}^*-x^*) = (A - I ) e_{QN}.
\end{equation*}

The residual of the last time step, which is used in the termination criterion of the waveform iteration \eqref{Conv-crit}, is given by
\begin{equation}\label{res}
\hat{r}(\Phi x_{QN})(t_f) = G \left( \left(A -I\right) \Phi x_{QN} - b \right),
\end{equation}
where $G = \begin{bmatrix}
	0 & \dots & 0 & I
\end{bmatrix} \in \mathbb{R}^{Nd \times d}$. We have the following result:

\begin{theorem}\label{QNWRTATerm}
  If the coupled problem is linear and the time grids $\mathcal{T}_2$ and $\mathcal{T}_{QN}$ are fixed, but possibly different, then $\hat{r}(\Phi x_{QN}^*)(t_f)=0$.
\end{theorem}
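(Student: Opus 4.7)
The plan is to exploit the fact that every time grid, by the definition in the paper, ends at $T_f$, together with the fact that linear interpolation is nodally exact. If $G_{QN}\in\mathbb{R}^{d\times dN_{QN}}$ denotes the block-row operator that extracts the value associated with the last node of a vector indexed by $\mathcal{T}_{QN}$ (the analogue of $G$ for the QN grid), then evaluating a linear interpolant at the common endpoint $T_f$ simply returns the data stored at that node. In matrix form this gives the two identities
\[
G\,\Phi = G_{QN}, \qquad G_{QN}\,\Theta = G.
\]
Justifying these two identities is the main conceptual step; they follow directly from the shape of the linear interpolation matrices $\Phi,\Theta$ together with the fact that both grids terminate at $T_f$.

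Next, I would write the QN fixed-point equation \eqref{simQNWRTA} in the dimensionally consistent form $x^{*}_{QN} = \Theta A \Phi\, x^{*}_{QN} + \Theta b$, noting that since $b\in\mathbb{R}^{dN_2}$ and $x^{*}_{QN}\in\mathbb{R}^{dN_{QN}}$ the sampling matrix $\Theta$ must act on $b$ as well (an implicit convention in \eqref{simQNWRTA}). Applying $G_{QN}$ from the left and using the two identities above,
\[
G\,\Phi\, x^{*}_{QN} = G_{QN}\, x^{*}_{QN} = G_{QN}\Theta A \Phi\, x^{*}_{QN} + G_{QN}\Theta\, b = G A \Phi\, x^{*}_{QN} + G b.
\]

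Rearranging yields $G\bigl((A-I)\Phi\, x^{*}_{QN} + b\bigr) = 0$, which by \eqref{res} is exactly $\hat{r}(\Phi x^{*}_{QN})(t_f)=0$. The algebraic manipulation itself is routine; the substantive content of the theorem is that because the auxiliary grid $\mathcal{T}_{QN}$ and the solver grid $\mathcal{T}_2$ share the endpoint $T_f$, no interpolation error enters the termination residual at $T_f$, even when the two grids differ in the interior. Thus the genuine work of the proof consists entirely of extracting and writing down the two endpoint identities for $\Phi$ and $\Theta$, and everything else is one line of linear algebra applied to the fixed-point equation.
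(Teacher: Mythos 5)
Your proof is correct and follows essentially the same route as the paper: both arguments rest on the observation that, because $\mathcal{T}_2$ and $\mathcal{T}_{QN}$ share the endpoint $T_f$, the last block rows of $\Phi$ and $\Theta$ are $[0,\dots,0,I]$, and then apply the last-row extraction to the fixed-point equation \eqref{simQNWRTA}. Your version is in fact slightly cleaner: by distinguishing $G$ from $G_{QN}$ and writing the right-hand side as $\Theta b$, you make dimensionally precise the identities that the paper states loosely as $G\Phi=G$ and $G\Theta=G$ (and you resolve its sign/dimension typos), but the underlying idea is identical.
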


\begin{proof}
To show that the rhs of \eqref{res} is 0 for $x_{QN}^{*}$, we use that $t_f$ belongs to the time grids ${\cal T}_{QN}$ and ${\cal T}_2$. Thus, the interpolation matrices $\Phi$ and $\Theta$ have the following structure
\begin{equation*}
	\begin{bmatrix}
		\ddots &  & &   \\
		& \ddots & \vdots&   \\
		& \dots & \ddots &  \\
		0 & \dots & 0 & I
	\end{bmatrix},
\end{equation*}
where the last identity block has dimension $d$. 
	
The solution of the fixed point iteration in Equation \eqref{simQNWRTA} fulfills
\begin{align*}
	0 = ((\Theta A \Phi - I) x^*_{QN} + \Phi b).
\end{align*}
This holds in particular for the last row where we have
\begin{align*}
	0 = G \left( \left(\Theta A \Phi - I \right) x^*_{QN} - \Theta b \right).
\end{align*}
Simplifying using the fact that $G \Theta = G$ and $G \Phi = G$ yields
\begin{align*}:
0 =  \left( \left(G\Theta A \Phi - G \right) x_{QN}^* - G \Theta b \right) = G \left( (A-I)\Phi x_{QN}^* - b \right).
\end{align*}
Thus, the residual in the last time step is $0$ for $x^{*}_{QN}$. 
\end{proof}

Thus, in combination with Theorem \ref{FiniteConvQN}, this theorem guarantees that the time adaptive QNWR terminates after a finite number of iterations independently of the chosen time grid ${\cal T}_{QN}$.

\section{Heat test case}

To test the new method, we first consider two coupled linear heat equations as in \cite{BiMeMo23,BiMo18, BiMo19}. The combined problem is given by
\begin{align*}
	\alpha_1 \dot{u}_1(t,x) - \lambda_1 \Delta u_1(t,x) &= 0,  \quad x \in \Omega_1, \\
	\alpha_2 \dot{u}_2(t,x) - \lambda_2 \Delta u_2(t,x) &= 0, \quad  x \in \Omega_2, \\
	u_1(x,t) &= u_2(x,t), \quad  x \in \Gamma, \\
	u_2(x,0) &=  u_{0_2}(x), \quad  x\in \Omega_2,\\
	u_1(x,t) &= 0, \quad  x\in \partial \Omega_1 \setminus \Gamma, \\
	u_2(x,t) & = 0, \quad  x\in \partial \Omega_2 \setminus \Gamma, \\
	u_1(x,0) &=  u_{0_1}(x), \quad  x\in \Omega_1,\\
	\lambda_2 \frac{\partial u_2(x,t)}{\partial n} &= \lambda_1 \frac{\partial u_1(x,t)}{\partial n}, \quad  x \in \Gamma, 
\end{align*}
where $t\in[0,T_f]$, $\Omega_1 = [-1,0]\times[0,1]$, $\Omega_2 = [0,1]\times[0,1]$ and $\Gamma=\Omega_1\cap \Omega_2$. The constants $\lambda_1$ and $\lambda_2$ denote the thermal conductivities for the different materials in $\Omega_1$ and $\Omega_2$. Likewise, $\alpha_{m}$ for $m=1,2$ are constants defined by $\alpha_m = c_m \rho_m$, where $\rho_m$ is the density and $c_m$ the specific heat capacity of the material. The different material parameters used in the study are shown in Table~\ref{materials}. The initial value is given by $u(x,t) = 500\sin(\pi y)\sin(\pi(x + 1)/2)$.
\begin{table}
	\begin{center}
		\begin{tabular}{ |c|c|c| } 
			\hline
			Material & $\alpha$ $[J/(Km^3)]$ & $\lambda$ $[W/(mK)]$ \\ \hline
			Steel & 3 471 348 & 49 \\ \hline
			Water & 4 190 842 & 0.58 \\ \hline 
			Air & 1 299 465 & 0.024 \\
			\hline
		\end{tabular}
	\end{center}
	\caption{The different material parameters.}
	\label{materials}
\end{table} 

This coupled problem will be solved using Dirichlet-Neumann waveform iterations. Here, we use the Gau\ss -Seidel variant. First, the continuous Dirichlet problem on the time window $[0,T_f]$ is given by

\begin{align*}
	\alpha_1 \dot{u}^{k+1}_1(t,x) - \lambda_1 \Delta u^{k+1}_1(t,x) &= 0,  \quad x \in \Omega_1, \\
	u^{k+1}_1(x,t) &= 0, \quad  x\in \partial \Omega_1 \setminus \Gamma, \\
	u^{k+1}_1(x,t) &= u^k_\Gamma, \quad  x \in \Gamma,  \\
	u_1^{k+1}(x,0) &=  u_{0_1}(x), \quad  x\in \Omega_1,
\end{align*}
where $u_\Gamma^k$ denotes the interface temperature. Second, the Neumann problem on $[0,T_f]$ is given similarly by
\begin{align*}
	\alpha_2 \dot{u}^{k+1}_2(t,x) - \lambda_2 \Delta u^{k+1}_2(t,x) &= 0, \quad  x \in \Omega_2 \\
	u^{k+1}_2(x,t) &= 0, \quad  x\in \partial \Omega_2 \setminus \Gamma \\
	\lambda_2 \frac{\partial u^{k+1}_2(x,t)}{\partial n} &= q^{k+1}, \quad  x \in \Gamma, \\
	u_2^{k+1}(x,0) &=  u_{0_2}(x), \quad  x\in \Omega_2, 
\end{align*}
where $q^{k+1}=\lambda_1 \frac{\partial u_1^{k+1}(x,t)}{\partial n}$ denotes the heat flux from the Dirichlet domain. 

Exactly as in \cite{BiMo18} and \cite{BiMeMo23}, the two sub-problems are discretized using linear finite elements in space and SDIRK2 in time. For the fixed time step methods, the time step size is chosen such that both solvers obtain the same CFL number. This corresponds to choosing 
\begin{align*}
	\Delta t_1 &= T_f/(max(1, \lfloor(\alpha_1 \lambda_2)/(\alpha_2 \lambda_1)\rfloor)N), \\
	\Delta t_2 &= T_f/(max(1, \lfloor(\alpha_2 \lambda_1)/(\alpha_1 \lambda_2)\rfloor) N,
\end{align*} where the integer $N$ corresponds to a number of base time steps.

For the time adaptive methods, where the sub solvers select the time step based on an error measurement, a lower accuracy solution $\hat{u}$ can be obtained from the SDIRK2 method. The local error can then be estimated by $l^{k+1, n+1}_u = \tilde{u}^{k+1, n+1} - u^{k+1, n+1}$. The next time step is given by a PI controller, defined as
\begin{equation*}
	\Delta t^{k+1, n+1} = \Delta t^{k+1, n} \left (\frac{TOL_{TA}}{||l^{n+1}||_2}\right )^{1/12}\left (\frac{TOL_{TA}}{||l^{n}||_2}\right)^{1/12}.
\end{equation*} 
For the first time step we set $l^{0} = TOL_{TA}$, since we do not have any information of the previous error estimate. The first time step size is given by
\begin{equation*}
	\Delta t^0 = \frac{|T_f-T_0|TOL_{TA}^{1/2}}{100(1+||f(u_0)||_2)},
\end{equation*}
as suggested in \cite{So}. We choose $TOL_{TA}=TOL_{WR}/5$, compare \cite{BiMeMo23}. 

\subsection{Preliminary experiments}\label{preliminaryStudy}

We now study the choice of the time grid $\mathcal{T}_{QN}$ in the QN method for the case were both sub solvers use equidistant time grids, as well as when both sub solvers are time adaptive. For these experiments both the test case and algorithm will be implemented using python based on \cite{Me25}, allowing us to easily change the time grid $\mathcal{T}_{QN}$ inside the QN method. The updated code with the QN implementation is available under \cite{code}.

\subsubsection{Interpolation error and termination}

We first consider equidistant time grids and use $N = 4, ..., 256$ number of base time steps. We also use an equidistant time grid in the Quasi-Newton method with grid sizes $N_{QN} = 10, 100, 10000$. The tolerance for the termination criteria was set to $10^{-12}$ in combination with a maximal number of QN iterations of 20. The error in the last time step and the number of iterations are reported in Figure \ref{iterationsQNF} and \ref{ErrorQNF}, respectively, for the different test cases. 

\begin{figure}
	\centering
	\begin{subfigure}[b]{0.3\textwidth}
		\centering
		\includegraphics[width=\textwidth]{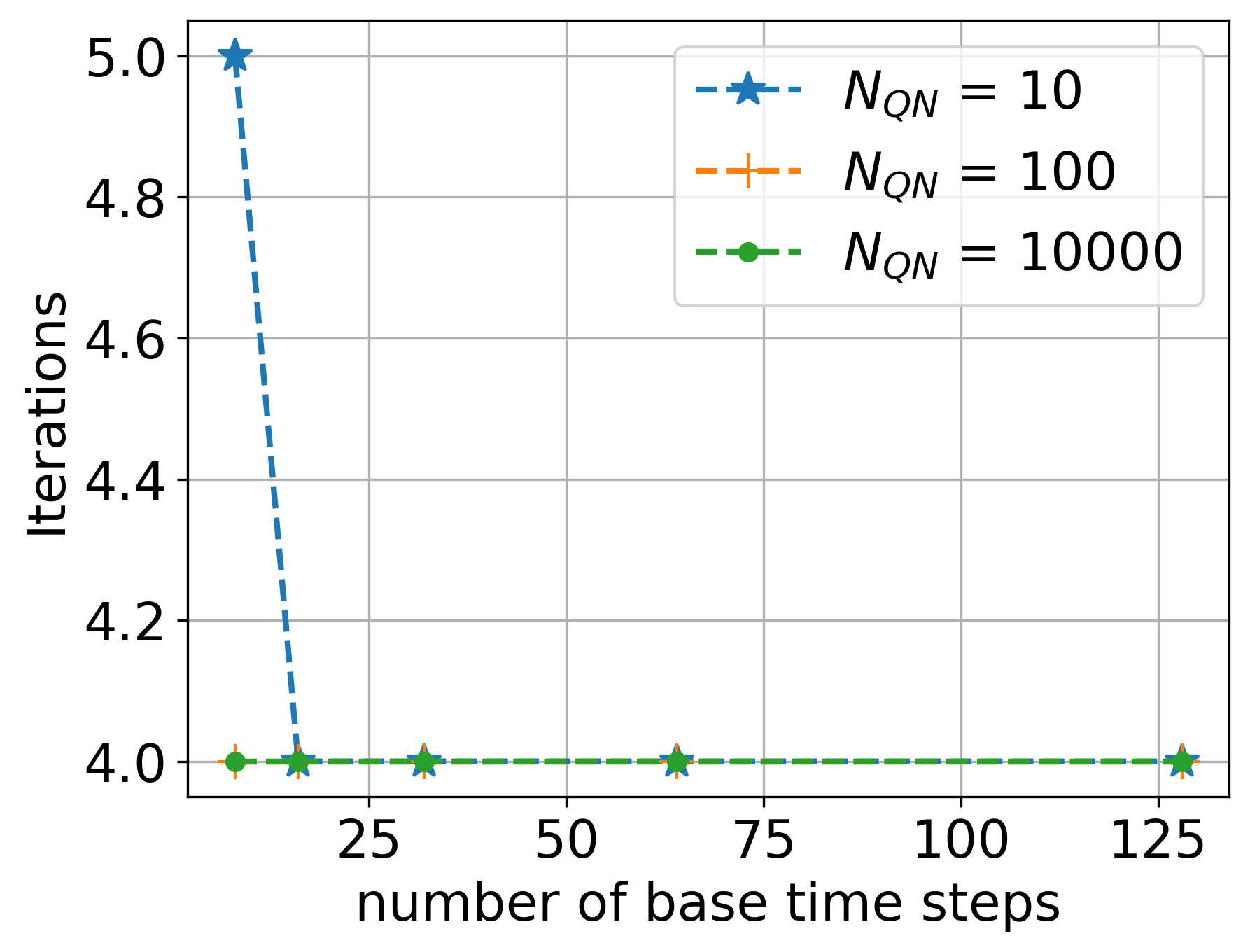}
		\caption{Air-steel}
	\end{subfigure}
	\hfill
	\begin{subfigure}[b]{0.3\textwidth}
		\centering
		\includegraphics[width=\textwidth]{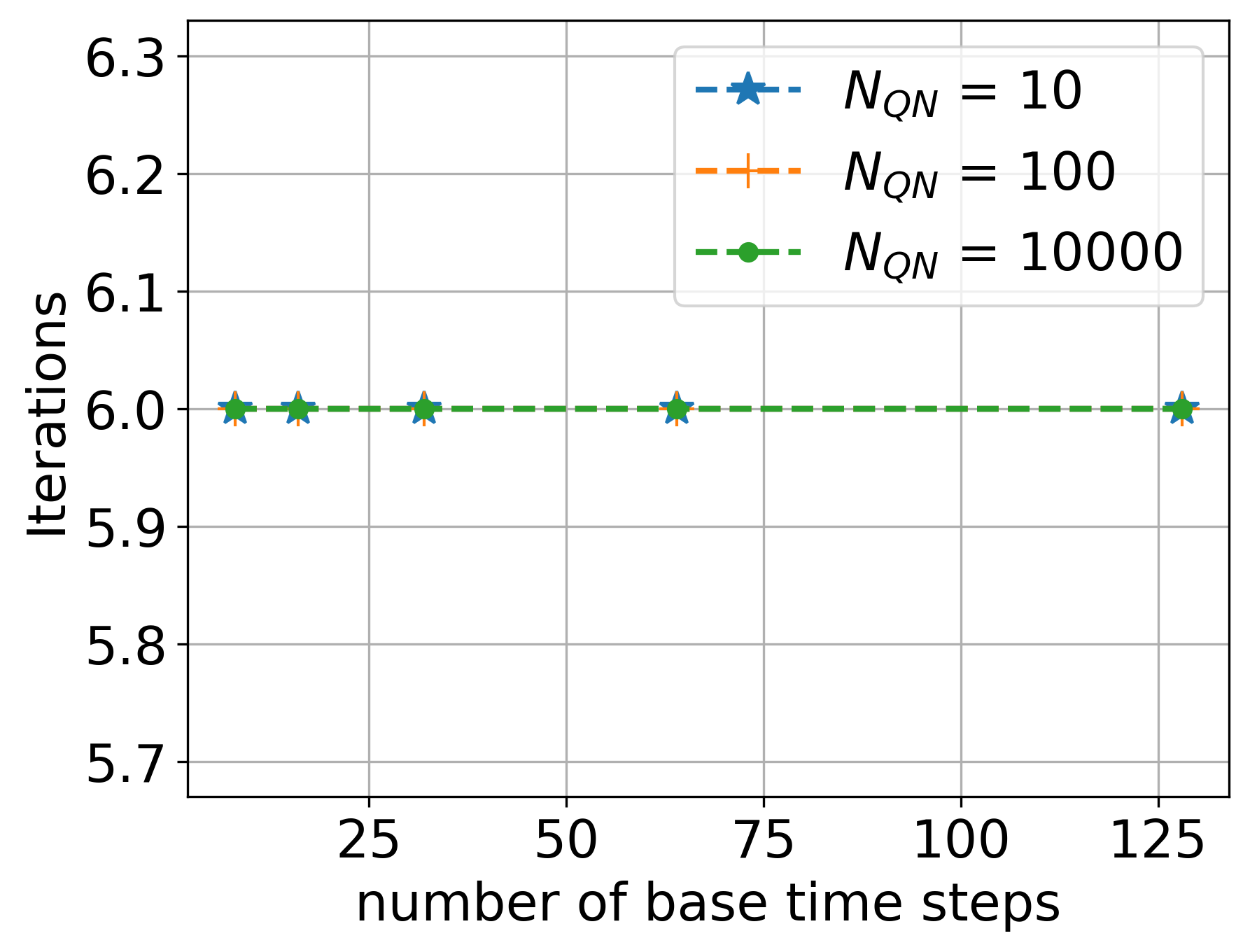}
		\caption{Air-water}
	\end{subfigure}
	\hfill
	\begin{subfigure}[b]{0.3\textwidth}
		\centering
		\includegraphics[width=\textwidth]{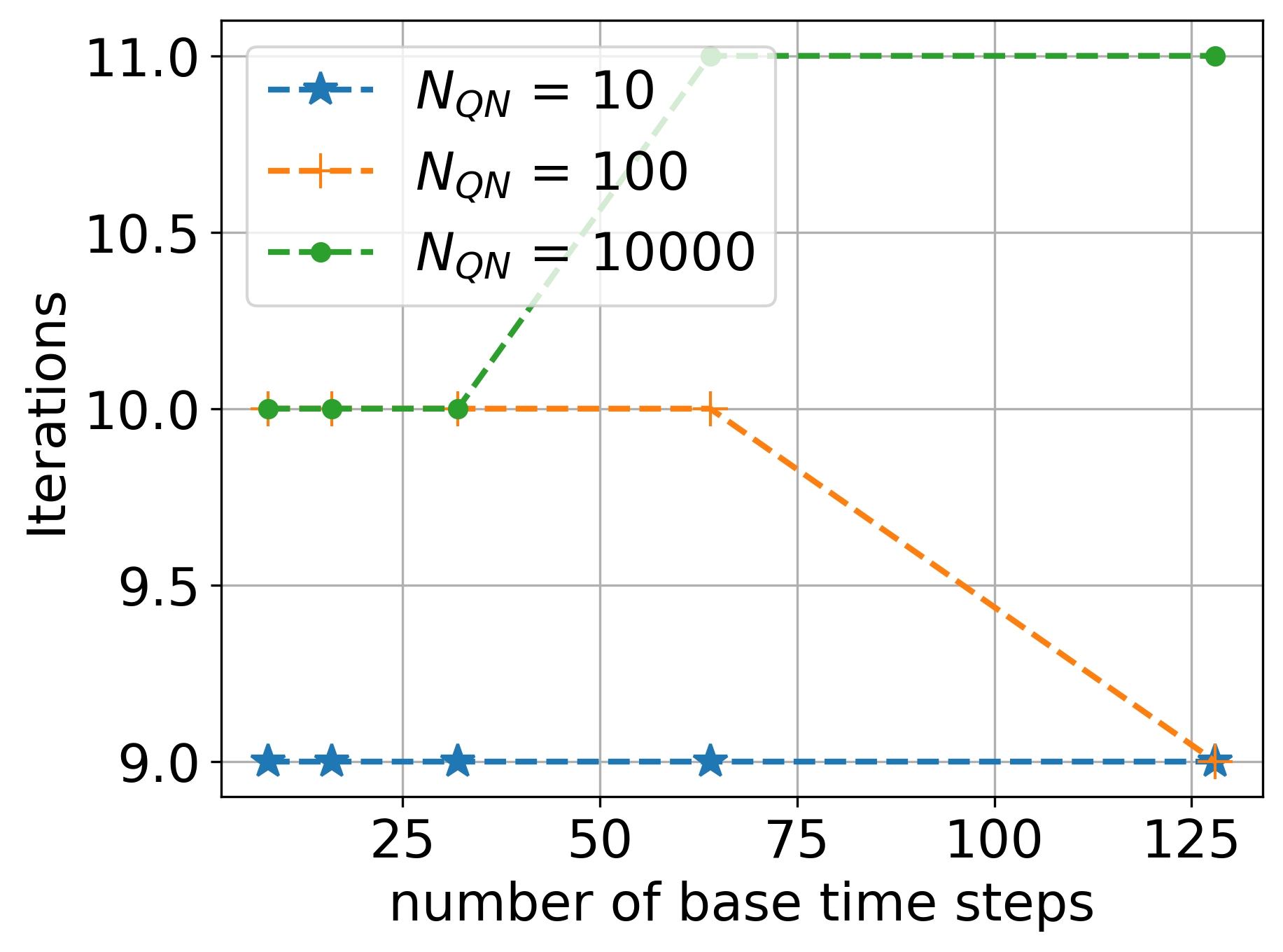}
		\caption{Water-steel}
	\end{subfigure}
	\hfill
	\caption{Number of iteration compared to the number of base time steps for different number of QN time steps.}
	\label{iterationsQNF}
\end{figure}

\begin{figure}
	\centering
	\begin{subfigure}[b]{0.3\textwidth}
		\centering
		\includegraphics[width=\textwidth]{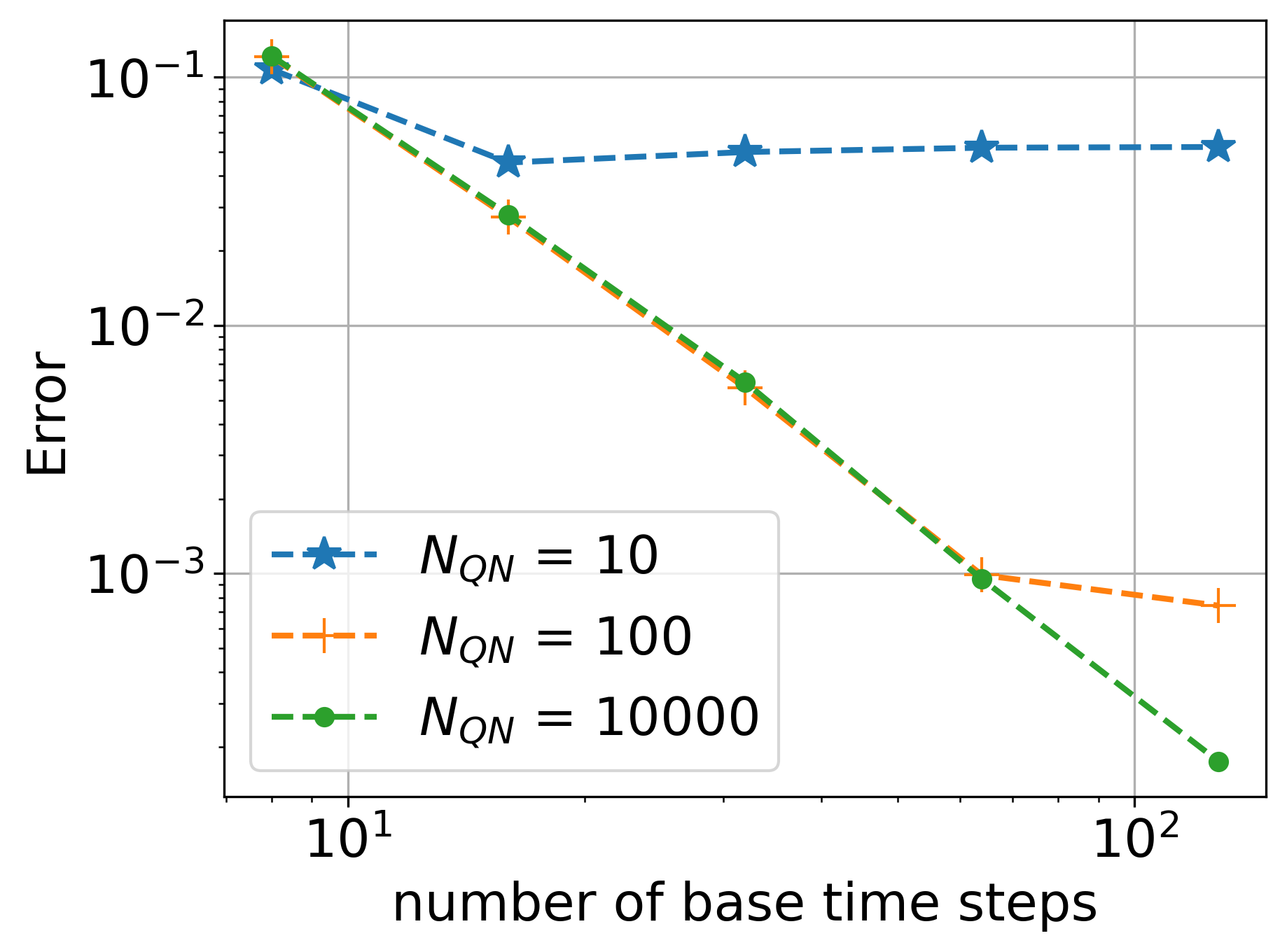}
		\caption{Air-steel}
	\end{subfigure}
	\hfill
	\begin{subfigure}[b]{0.3\textwidth}
		\centering
		\includegraphics[width=\textwidth]{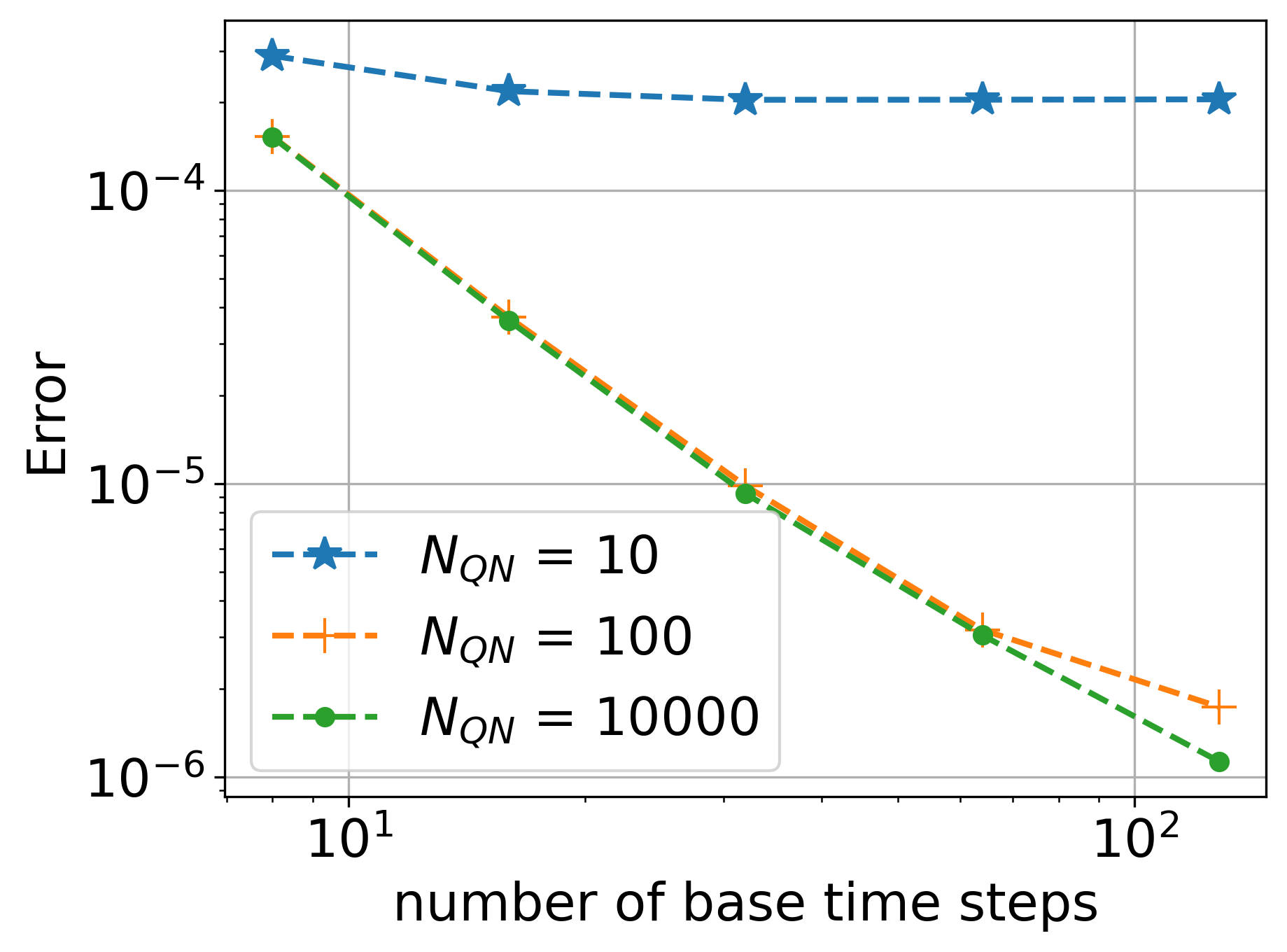}
		\caption{Air-water}
	\end{subfigure}
	\hfill
	\begin{subfigure}[b]{0.3\textwidth}
		\centering
		\includegraphics[width=\textwidth]{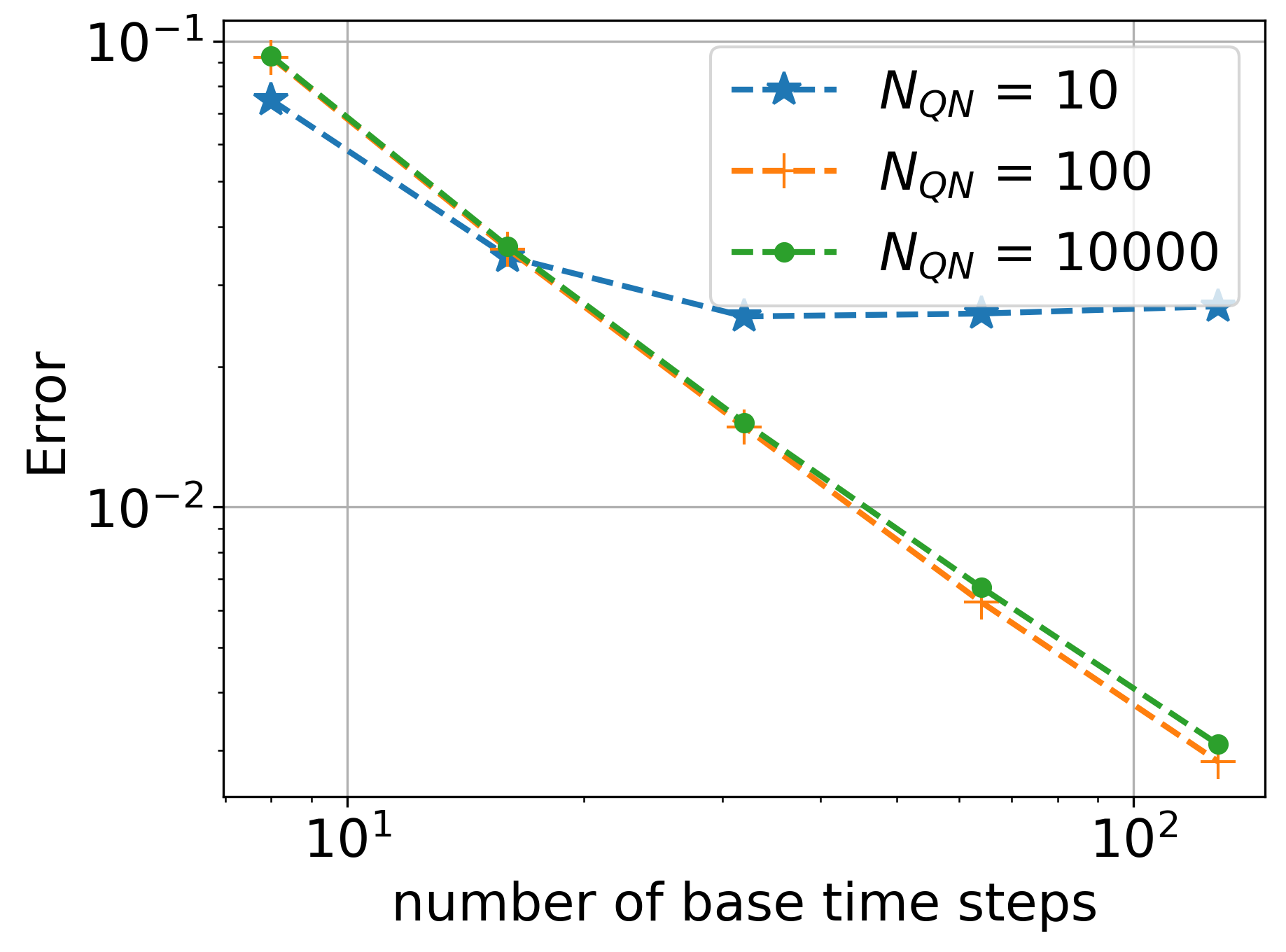}
		\caption{Water-steel}
	\end{subfigure}
	\hfill
	\caption{Time discretization error of the last time step compared to the number of base time steps for different number of QN time steps.}
	\label{ErrorQNF}
\end{figure}

In Figure \ref{iterationsQNF}, we see that the QNWR method terminated in a finite number of iterations for all test cases, which is in agreement with Theorem \ref{QNWRTATerm}. The number of iterations to reach the termination condition is almost independent of the QNWR time grid.

Looking at the error in the last time step shown in Figure \ref{ErrorQNF}, one can observe that it quickly stagnates for $N_{QN} = 10$. This result is in agreement with Lemma \ref{interpError}, and due to that the QN time grid does not contain enough time steps to accurately represent the solution, resulting in a large interpolation error. 

Similarly, based on the result from Lemma \ref{interpError}, one would expect the error for the case $N_{QN} = 100$ to level off once both solvers do more than a 100 time steps. Additionally, one would expect the error for $N_{QN} = 10000$ to decrease linearly, since the QN time grid is finer than the time grids in the sub solvers. This is exactly what we see in the Air-steel and Air-water test case in Figure \ref{ErrorQNF}, where the case $N_{QN} = 10000$ decreases linearly while the error flattens out for $N_{QN} = 100$ when both solvers do more than a 100 time steps. 

For the Water-steel test case with $N_{QN} = 100$, the error also decreases linearly, which can be explained by that the time integration error is significantly larger than for the Air-steel and Air-water test cases. From these observations, we conclude that there is very little benefit to using more time steps in the QN time grid compared to the number of time steps in the sub-solvers. Furthermore, while using less time steps in the QN time grid reduces the computational cost, it also increases the interpolation error. 

\subsubsection{Comparison of different auxiliary time grids for the time adaptive case}

We now discuss how to choose the auxiliary grid $\mathcal{T}_{QN}$. A simple choice is be to use an equidistant time grid with a user defined number of time steps. However, based on our previous observations we expect that the computationally most efficient strategies use roughly the same number of time steps in the QN time grid as the two sub solvers. We therefore propose to select $\mathcal{T}_{QN}$ automatically based on the sub-solvers time grid after the first iteration. The following three strategies are considered:
 \begin{itemize}
	\item Stategy 1: The timegrid of the Neumann solver in the first iteration, $\mathcal{T}_{N}^1$,
	\item Stategy 2: The timegrid of the Dirichlet solver in the first iteration, $\mathcal{T}_{D}^1$,
	\item Stategy 3:  Equidistant time grid with the minimum number of time steps of the first time grid in the Dirichlet and Neumann solver.
\end{itemize}
To compare these strategies we look at the computational efficiency, error over work, reported in Figure \ref{TATGEfficiency}. The error was measured as the difference between the last time step of the solution and a solution that was computed with a tolerance of $10^{-6}$. The cost of the algorithm can be accurately measured by counting the number of time steps done by the two sub-solvers, since direct solvers are employed inside the two sub-solvers. 

\begin{figure}
	\centering
	\begin{subfigure}[b]{0.3\textwidth}
		\centering
		\includegraphics[width=\textwidth]{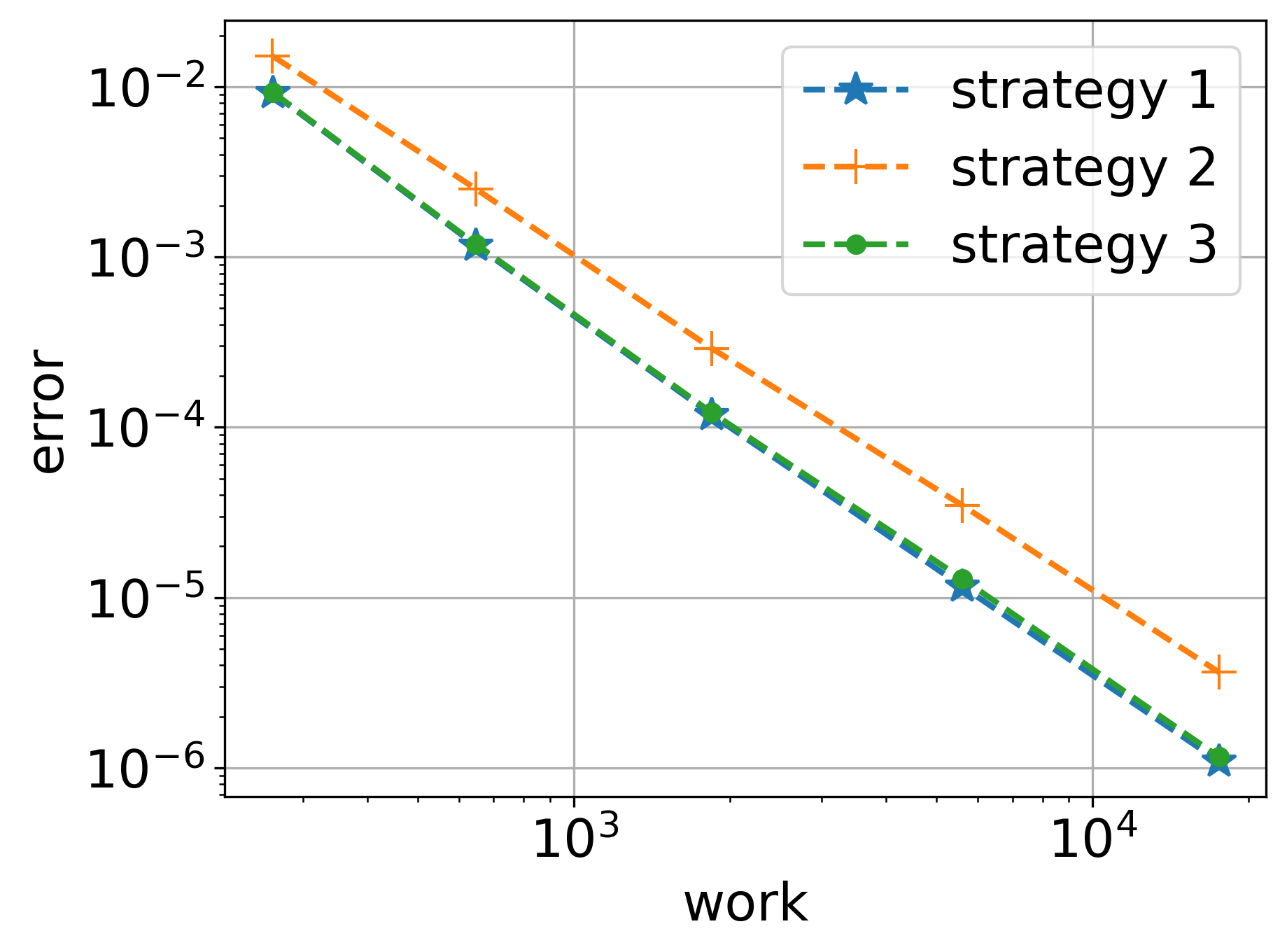}
		\caption{Air-steel}
	\end{subfigure}
	\hfill	
	\begin{subfigure}[b]{0.3\textwidth}
		\centering
		\includegraphics[width=\textwidth]{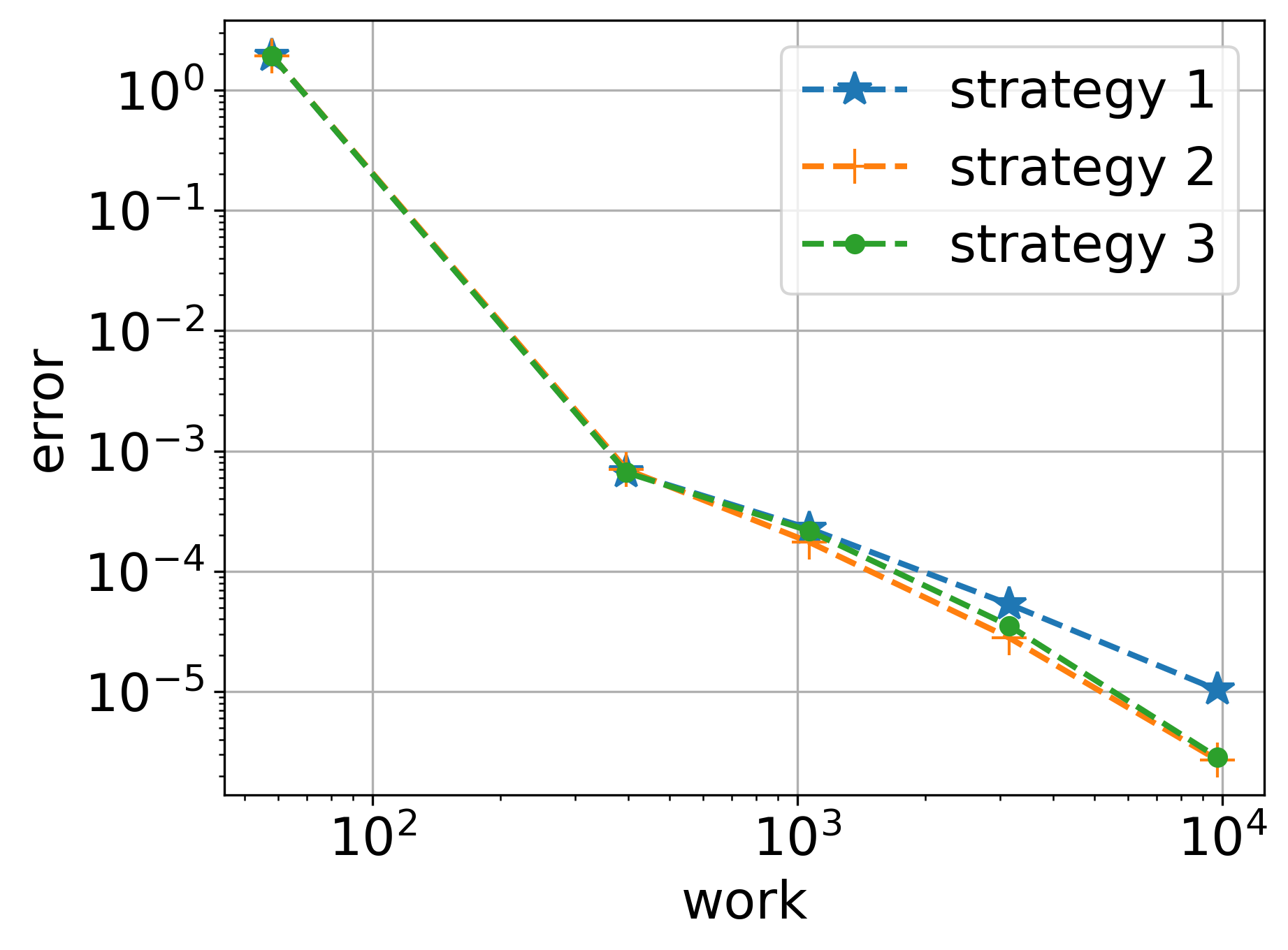}
		\caption{Air-water}
	\end{subfigure}
	\hfill
	\begin{subfigure}[b]{0.3\textwidth}
		\centering
		\includegraphics[width=\textwidth]{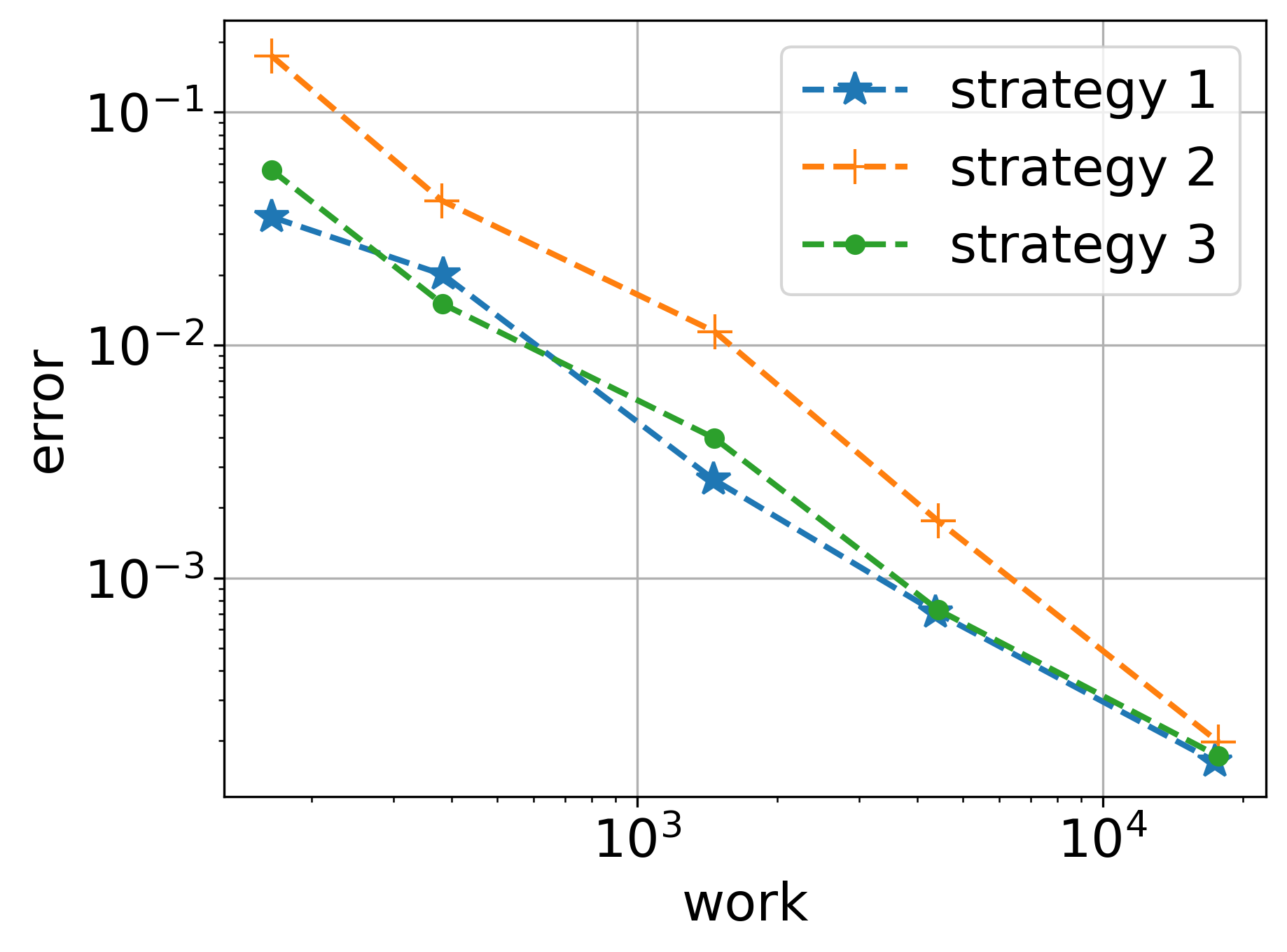}
		\caption{Water-steel}
	\end{subfigure}
	\hfill
	\caption{The computational efficiency of the QNWR algorithm for the three different time grid selection strategies inside the time adaptive QNWR.}
	\label{TATGEfficiency}
\end{figure}

The three different time grid selection strategies in Figure \ref{TATGEfficiency} are quite close with strategy 2 performing slightly worse than strategy 1 and 3. Thus, there is little motivation not to choose strategy 1, since this reduces the QNWR method to \eqref{QNWRFixed} for fixed time steps. It also eliminates the interpolation error in the fixed time step case. 

\subsection{Efficiency of the time adaptive QNWR method}

We now compare the computational efficiency of our time adaptive QNWR algorithm with the optimal relaxation presented in \cite{BiMeMo23} and the multi-rate version of QNWR presented in \cite{RuUe21}. Similar to previous experiments, the code for the sub-solvers is based on \cite{Me25} and can be found here \cite{code}. Unlike in previous experiments we use the QNWR implementation in preCICE that is detailed in section \ref{implementationDetail}. For the optimal relaxation presented in \cite{BiMeMo23}, the coupling code is implemented in a stand alone python code closely following \cite{Me25}. 

We also include preCICE's relaxation feature, which uses a fixed relaxation parameter that is set before the simulation starts. We choose it as the optimal relaxation parameter of the first time step in this case.

As a measure of the computational efficiency we compare error over work, for tolerance $10^{-i}$, where $i = -1,...,-5$. The error was measured as the difference between the solution of the last time step and a solution that was computed with a tolerance of $10^{-6}$. 
For the multirate QNWR we used $N = 8, ..., 256$ base time steps in the sub solvers. Furthermore, a constant tolerance $TOL_{WR}$ of $10^{-6}$ was used in the multirate QNWR, corresponding to the finest tolerance used for the time adaptive methods. Work is measured as the total number of time steps done by the sub solvers, since direct solvers are employed inside the two sub-solvers and the cost of the coupling algorithm is assumed to be negligible compared to the cost of the sub-solvers. The results are presented in Figure \ref{QNEfficiency}.  

\begin{figure}
	\centering
	\begin{subfigure}[b]{0.3\textwidth}
		\centering
		\includegraphics[width=\textwidth]{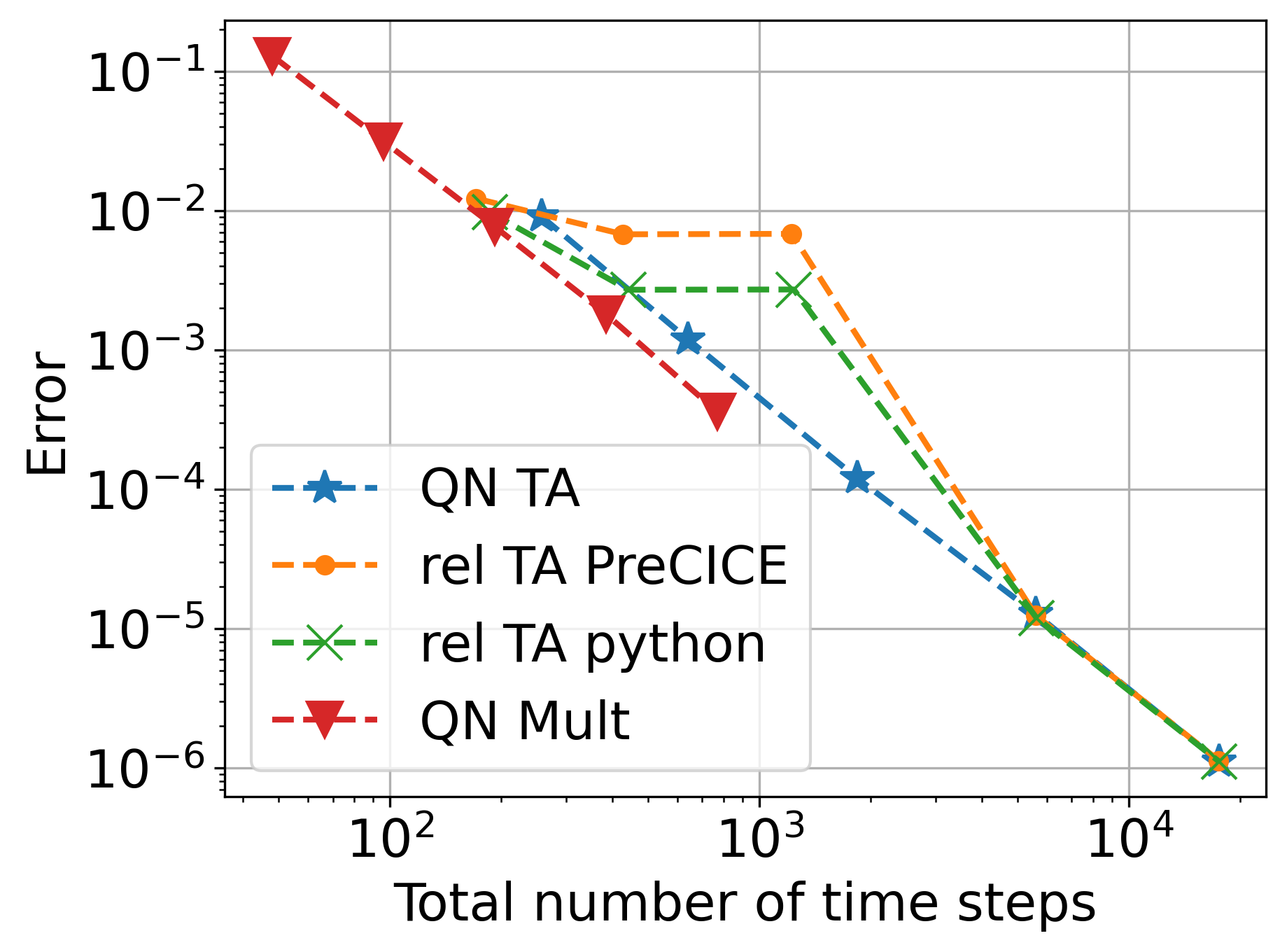}
		\caption{Air-steel}
	\end{subfigure}
	\begin{subfigure}[b]{0.3\textwidth}
		\centering
		\includegraphics[width=\textwidth]{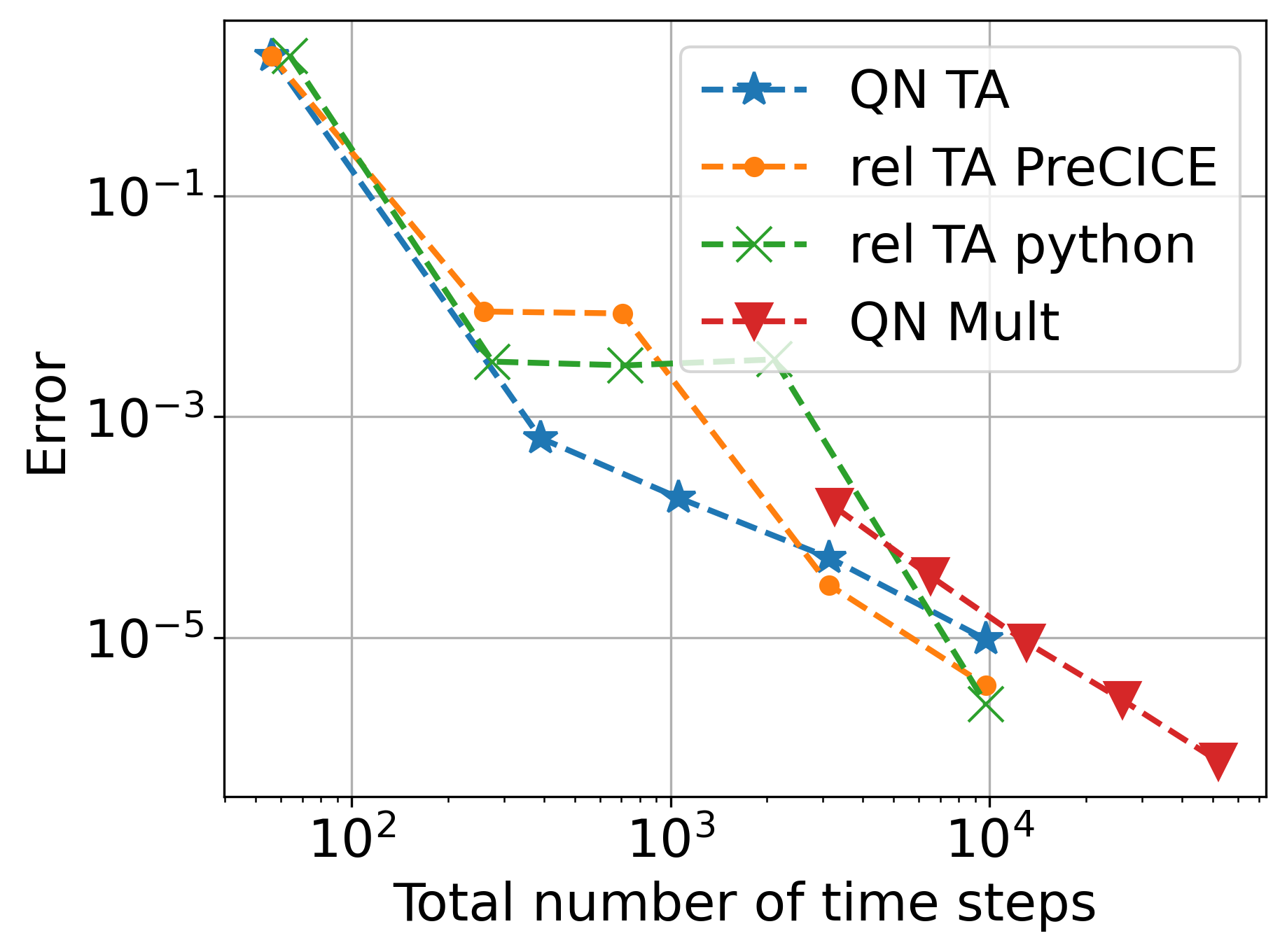}
		\caption{Air-water}
	\end{subfigure}
	\begin{subfigure}[b]{0.3\textwidth}
		\centering
		\includegraphics[width=\textwidth]{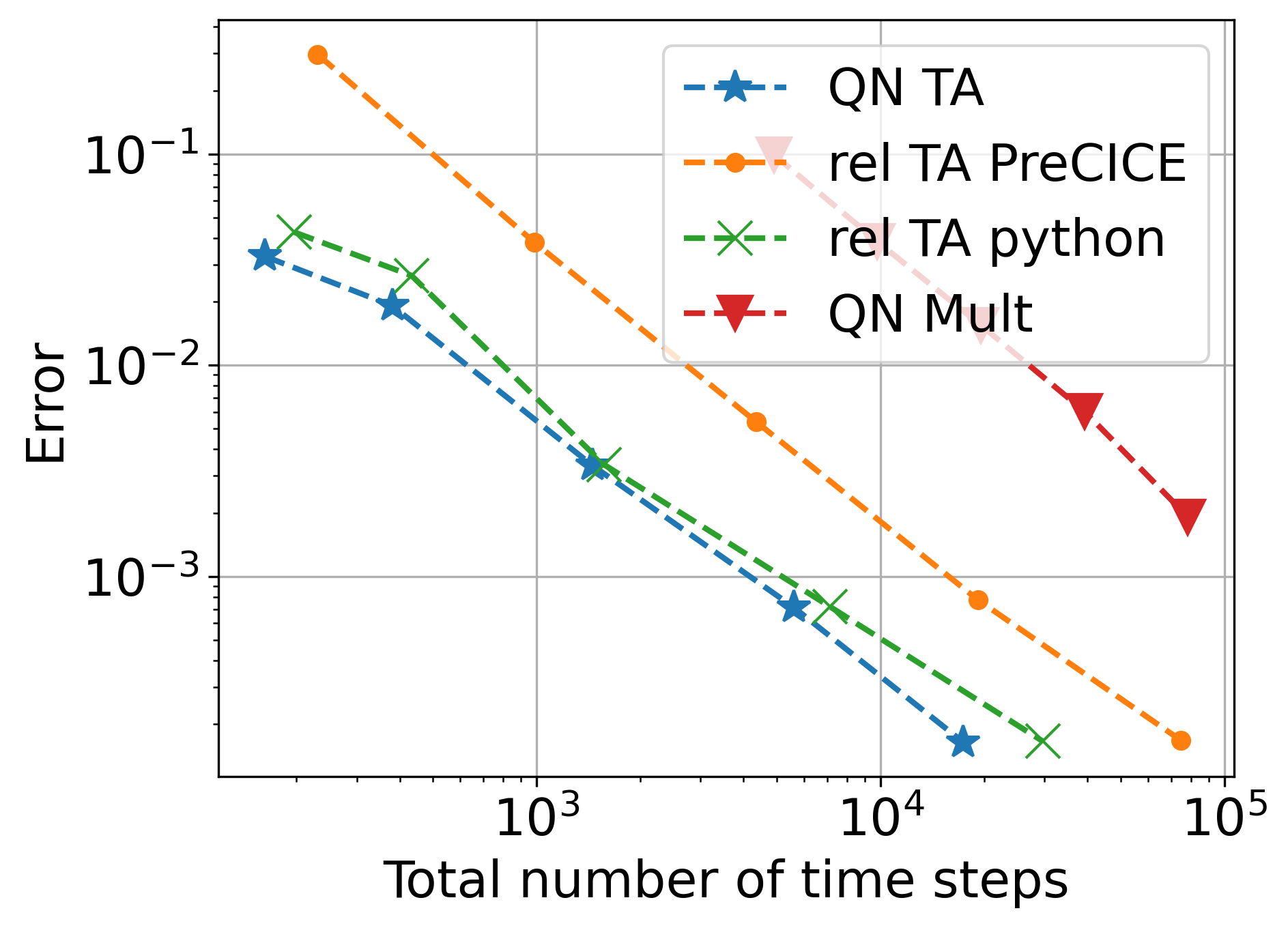}
		\caption{Water-steel}
	\end{subfigure}
	\caption{The error of the last time step compared to different tolerances.}
	\label{QNEfficiency}
\end{figure}

We see that for the Air-steel and Air-water test case all four methods are quite close together, implying that they will give similar performance on these two test cases. However, for the Water-Steel test case, we see a big difference in performance, with time adaptive QNWR being the fastest closely followed by the optimal relaxation parameter from \cite{BiMeMo23}. Using the constant acceleration method in preCICE results in a large loss of performance, which is due to that the optimal relaxation parameter of the first time step differs from the optimal relaxation parameter used in \cite{BiMeMo23}, resulting in significantly more iterations. The constant acceleration feature in preCICE is still more efficient than the multirate QNWR setup even though the multirate QNWR method terminated in fewer iterations. This can in part be explained by that the time adaptive solvers are able to select better step size ratios between the two solvers, making the time adaptive solvers the more robust choice.

\section{Fluid-Structure interaction}

We now consider a much more complex setting, were we have different physics, discretization and solvers in our two domains. We use the test case "perpendicular flap" from the preCICE tutorials \cite{ChDa22}. It consists of an elastic beam that is mounted to the lower wall in the middle of a fluid channel, shown in Figure \ref{BeamSimulation}. 

The fluid is modeled by the incompressible Navier-Stokes equations in an arbitrary Lagrangian–Eulerian framework, and is given by
\begin{align*}	
	\frac{\partial \hat{u}}{\partial t} + (\hat{u}-u_g) \cdot \nabla \hat{u} - \nu \Delta \hat{u} &= \frac{\nabla p}{\rho_f}, \quad x \in \Omega_F(t),\\
	\nabla \hat{u} &= 0, \quad x \in \Omega_F(t), 
\end{align*}
where $\hat{u}$ is the velocity of the flow, $u_g$ the velocity of the domain and $p$ the pressure. The fluid domain, denoted by $\Omega_F(t)$, is $6$ m long and $4$ m tall and deforms based on the beams movement, as is demonstrated in Figure \ref{BeamSimulation}. For our test case, the density $\rho_f$ is given as $1 \frac{kg}{m^3}$ and viscosity $\nu$ is set to $1 m^2/s$. Furthermore, zero initial conditions are employed at the start of the simulation in the fluid. To minimize instabilities in the start up phase, the inflow speed on the left boundary is given by \begin{equation*}
\begin{cases}
	[10^4 t, 0] \text{ for } t \leq 10^{-3},\\
	[10, 0] \text{ for } t > 10^{-3}.
\end{cases}
\end{equation*} 
This is combined with a no-slip condition for the walls and the beam, as well as a zero gradient outflow condition on the right. 

The fluid solver uses OpenFOAM's pimpleFoam solver, as in the preCICE tutorial, which uses a finite volume discretization in space and BDF2 in time. Furthermore, the quadrilateral grid from the tutorial, shown in Figure \ref{BeamSimulation}, was also re-used together with openFOAM's Laplacian mesh motion solver. This is combined with OpenFoam's adaptive time stepping strategy, which adjusts the time step based on the CFL number in combination with an initial time step of $10^{-5}$.  

The beam is $0.1$ m wide and $1$ m long and is modeled using linear elasticity:
\begin{align*}
	\rho_s \frac{\partial^2 d}{\partial t^2} &=  \tau + \nabla \cdot \sigma,  \quad x \in \Omega_s,\\
	d &= 0,  \quad x \in \partial \Omega_s \setminus \Gamma.
\end{align*}
Here, $d$ is the displacement, $\tau$ the traction between the fluid and the solid, and $\rho$ the density given by $3 \cdot 10^{3} \frac{kg}{m^3}$ for this test case. Furthermore, $\sigma$ is the Cauchy stress tensor given by
\begin{equation*}
	\sigma = \lambda(\nabla \cdot d) I + \mu\left(\nabla d + (\nabla d)^T \right).
\end{equation*}
The constants $\lambda$ and $\mu$  are given by \begin{align}
	\lambda &= \frac{E \nu }{ ((1.0 + \nu)(1.0 - 2.0\nu))}, \\
	\mu &= \frac{E }{ (2.0(1.0 + \nu))},
\end{align}  where $E$ is the Young's modulus and $\nu$ the Poisson ratio, which for this test case are set to $4 \cdot 10^7 \frac{kg}{m^3}$ and $0.3$ respectively. Lastly, zero initial conditions are employed.

The beam is discretized using linear finite elements in space and SDIRK2 in time. For the spatial discretization, an equidistant triangular grid with a grid size of $10^{-2}$ was employed. Similar to the heat test case, we use an adaptive time stepping strategy based on a local error estimate $l^{n}$. Instead of a PI controller the following dead beat controller, given by
\begin{equation*}
	\Delta t^{k+1, n+1} = \Delta t^{k+1, n} \left(\frac{TOL_{TA}}{||l^{n+1}||_2}\right)^{1/2},
\end{equation*}  
was employed, together with an initial time step of $10^{-5}$. As before, we choose  $TOL_{TA}=TOL_{WR}/5$. The beam solver is implemented using the python bindings of the DUNE-FEM library, which is a powerful open source library for solving partial differential equations \cite{bastian2021}. 

To couple the fluid solver and solid solver, we use the following transmission conditions, given by \begin{align*}
	\frac{\partial d}{\partial t} &= u,  \\
	\tau &= (-p + \nu \left(\nabla u + (\nabla u)^T \right). 
\end{align*}  To solve the coupled problem we use a Dirichlet-Neumann waveform iteration, given by \begin{equation*}
		d^{k+1} = S \circ F(d^k),
\end{equation*}  
where $F$ denotes the fluid solver, which maps the displacements $d$ of the beam to forces on the beams surface. The solid solver is denoted by $S$ and maps the forces from the fluid to the displacement of the beam. 

On the software side, we use preCICE togheter with the OPEN Foam adapter \cite{chourdakis2023} and the python bindings to couple the solid and fluid solver together. The code for the subsolvers and the preCICE configuration file is available under \cite{code}. Furthermore, since we use non matching spatial meshes in the fluid and solid solver, a nearest-neighbor mapping was used to interpolate the data between the solid and the fluid mesh. This allows us to select reasonable spatial grids inside the sub solvers without regarding the spatial grid of the other solver. 

To further simplify the test case, only one time window with a size of $0.2$ was used, which is larger than the maximal desired time step for both the fluid and solid solver. Furthermore, an absolute convergence criterion given as \begin{equation*}
	\sqrt{\Sigma_i r^{n^2}_i \Delta x} \leq TOL_{WR}
\end{equation*} was employed, where $r^n_i$ denotes the residual of the last time step in node $i$.  
 
We now compare the number of iterations for the time adaptive method against no acceleration. To avoid excessive computation times, the maximal number of iterations was set to 10. While this choice is quite strict, diverging simulations tend to use excessively small time steps, resulting in long simulation times. We do three runs with a coarse, medium and fine tolerance of $TOL_{WR} = 5 \cdot 10^{-3}, 5 \cdot 10^{-4}, 5 \cdot 10^{-5}$ and $CFL = 100, 10, 1$ respectively. The resulting number of iterations are shown in Table \ref{beamResults} and the simulation results for the coarsest test case are shown in Figure ~\ref{BeamSimulation}. 

\begin{table}
	\begin{center}
		\begin{tabular}{ |c|c|c| } 
			\hline
			Test-case & time adaptive QNWR & No acceleration \\ \hline
			Coarse & 2 & 2\\ \hline
			Medium & 3 & 3 \\ \hline 
			Fine & 6 & - \\
			\hline
		\end{tabular}
	\end{center}
	\caption{Number of iterations to reach the termination criteria for $T_f = 0.2$. The symbol - denotes that the simulation did not converge within 10 iterations. }
	\label{beamResults}
\end{table} 

\begin{figure}
	\centering
	\begin{subfigure}[b]{0.45\textwidth}
		\centering
		\includegraphics[width=\textwidth]{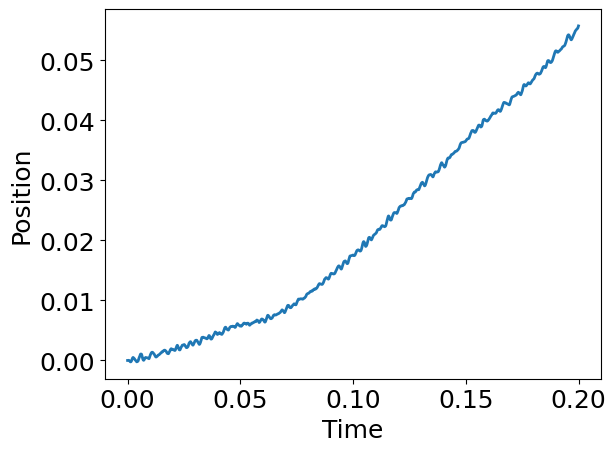}
		\caption{Displacement of the top of the beam during the simulation. }
	\end{subfigure}
	\begin{subfigure}[b]{0.45\textwidth}
		\centering
		\includegraphics[width=\textwidth]{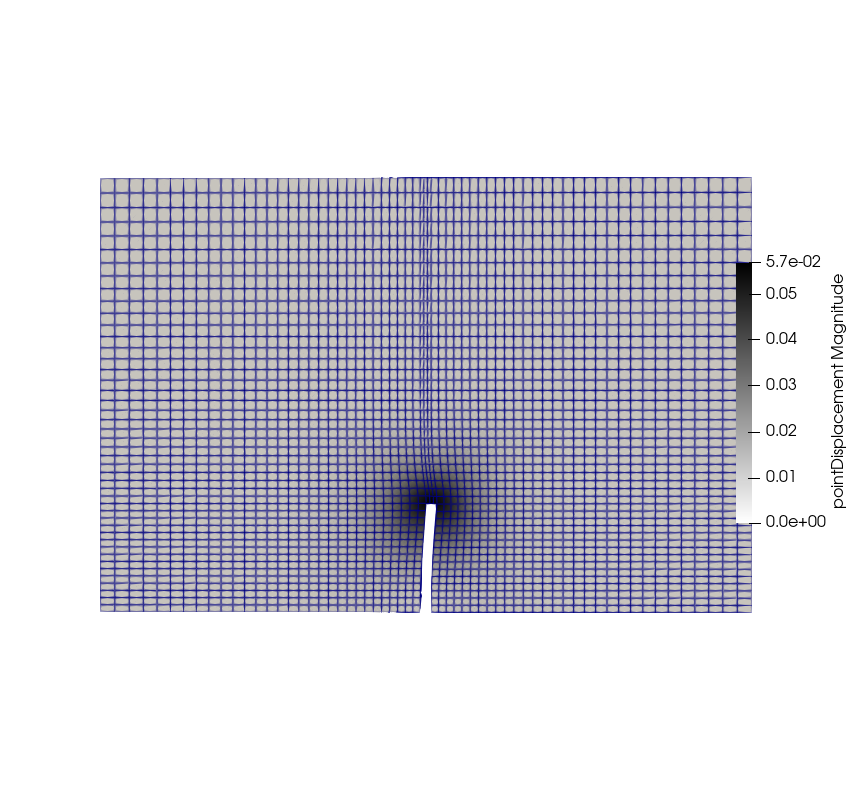}
		\caption{Deformation of the fluid mesh at $0.2$ seconds.}
	\end{subfigure}
	\caption{Simulation results for the coarse test case with $T_f = 0.2$.}
	\label{BeamSimulation}
\end{figure}

From Table \ref{beamResults} we can see that the waveform iteration terminated within three iterations with or without acceleration, corresponding to fast convergence behaviour. However, the fine test case did not terminate in 10 iterations without acceleration, showing that our time adaptive QNWR method increases robustness.

\section{Conclusion}

We presented an extension of the Quasi-Newton waveform relaxation method to time adaptivity. Our method uses a fixed auxiliary time grid. This introduces an interpolation error that can be controlled through the choice of the grid. Benefits are in convergence behaviour and ease of implementation. This only requires adding one extra interpolation step and we presented our implementation in the open source coupling library preCICE.

We demonstrated the methods efficiency by comparing it with optimal relaxation and QNWR using a fixed time grid on a simple thermal transfer case.  We also showcase the method on a more advanced Fluid-structure interaction test case, showing it's robustness. 

\bibliography{bib.bib}

\end{document}